\begin{document}

%
%
\def\addressncsu{Department of Mathematics, North Carolina State University, Raleigh, NC, USA} 
\def\addressauk{Department of Engineering Science, University of Auckland, New Zealand}
\def\addressmerced{Department of Applied Mathematics, University of California, Merced, CA, USA}

\title[OED under uncertainty]{Optimal design of large-scale nonlinear Bayesian 
inverse problems under model uncertainty}


\author{Alen Alexanderian$^1$, Ruanui Nicholson$^2$, and Noemi Petra$^3$}
\address{
$^1$\addressncsu\\
$^2$\addressauk\\
$^3$\addressmerced\\
}
\ead{\url{alexanderian@ncsu.edu},
     \url{ruanui.nicholson@auckland.ac.nz},
     \url{npetra@ucmerced.edu}}
     
\begin{abstract}
  We consider optimal experimental design (OED) for Bayesian nonlinear inverse
  problems governed by partial differential equations (PDEs) under model
  uncertainty.  Specifically, we consider inverse problems in which, in addition
  to the inversion parameters, the governing PDEs include secondary uncertain
  parameters. We focus on problems with infinite-dimensional inversion and
  secondary parameters and present a scalable computational framework for optimal 
  design of such problems.  The proposed approach enables Bayesian inversion and
  OED under uncertainty within a unified framework.  We build on the Bayesian
  approximation error (BAE) approach, to incorporate modeling uncertainties in
  the Bayesian inverse problem, and methods for A-optimal design of
  infinite-dimensional Bayesian nonlinear inverse problems. Specifically, a
  Gaussian approximation to the posterior at the maximum a posteriori probability
  point is used to define an uncertainty aware OED objective that is tractable to
  evaluate and optimize. In particular, the OED objective can be computed at a
  cost, in the number of PDE solves, that does not grow with the dimension of the
  discretized inversion and secondary parameters. The OED problem is formulated
  as a binary bilevel PDE constrained optimization problem and a greedy
  algorithm, which provides a pragmatic approach, is used to find optimal
  designs. We demonstrate the effectiveness of the proposed approach for a model
  inverse problem governed by an elliptic PDE on a three-dimensional
  domain. Our computational results also highlight the pitfalls of
  ignoring modeling uncertainties in the OED and/or inference stages.
\end{abstract}

\noindent{\it Keywords\/}: 
  Optimal experimental design, 
  sensor placement,
  Bayesian inverse problems,
  model uncertainty, 
  Bayesian approximation error.


\section{Introduction}
Models governed by partial differential equations (PDEs) are common in science
and engineering applications.  Such PDE models often contain parameters that
need to be estimated using observed data and the model. This requires solving
an inverse problem.  The quality of the estimated parameters is influenced
significantly by the quantity and quality of the measurement data.  Therefore,
optimizing the data acquisition process is crucial.  This requires solving an
optimal experimental design (OED) problem~\cite{AtkinsonDonev92,
ChalonerVerdinelli95,Ucinski05}.  In the present work, we focus on
inverse problems in which measurement data are collected at a set of sensors.
In this case, OED amounts to finding an optimal sensor placement.  In this
context, OED is especially important when only a few sensors can be deployed. 

While some parameters in the governing PDEs can be estimated by solving an
inverse problem, often there are additional uncertain model parameters that are
not being estimated. These parameters might be too
costly or impossible to estimate.  We call the uncertain parameters that are
being estimated in an inverse problem the \emph{inversion parameters} and refer
to the additional uncertain parameters as the \emph{secondary parameters}.
Such secondary parameters have also been referred to as auxiliary parameters,
latent parameters, or nuisance parameters in the literature. When solving
inverse problems with secondary parameters with significant uncertainty levels,
both the parameter estimation and data acquisition processes need to be aware
of such uncertainties.  In this article, we present a computational framework
for optimal design of nonlinear Bayesian inverse problems governed by PDEs
under model uncertainty.

Uncertainties in mathematical models can be divided into two classes: reducible
and irreducible~\cite{Smith13}. The reducible uncertainties are epistemic
uncertainties that can be reduced via statistical parameter estimation.  On the
other hand, irreducible uncertainties are either aleatoric uncertainties
inherent to the model that are impossible to reduce or are epistemic
uncertainties that are too costly or impractical to reduce.  Also, in some
applications we might have access to a probabilistic description of secondary
model parameters from previous studies and further reduction of the uncertainty
in such parameters may not be worth the additional computational cost.  We
consider such uncertainties as irreducible as well.  In the present work, we
focus on the case of irreducible uncertainties.


We consider models of the form
\begin{equation}\label{equ:model}
\obs = \afwd(\iparm,\iparb) + \vec\eta,
\end{equation}
where $\obs$ is a vector of measured data, $\afwd$ is a PDE-based model,
$\iparm$ and $\iparb$ are uncertain parameters, and $\vec\eta$ is a random
vector that models measurement noise. Herein, $\iparm$ is the inversion
parameter which we seek to infer, and $\iparb$ is a secondary uncertain model
parameter.  We assume the uncertainty in $\iparb$ to be irreducible. The
parameters $\iparm$ and $\iparb$ are assumed to be independent random variables
that take values in infinite-dimensional real separable Hilbert spaces $\hilbm$
and $\hilbb$, respectively.  Moreover, $\afwd$ is assumed to be nonlinear in
both $\iparm$ and $\iparb$.  The methods presented in this article enable
computing optimal experimental designs in such a way that the uncertainty in
the secondary parameters is accounted for.

\paragraph{Related work}
In the recent years, there have been numerous research efforts directed at
OED in inverse problems governed by PDEs.  See the
review~\cite{Alexanderian20}, for a survey of the recent literature in this
area.  There has also been an increased interest in parameter inversion and
design of experiments in systems governed by uncertain forward models; see
e.g.,~\cite{KolehmainenTarvainenArridgeEtAl11,
Aravkin12VanLeeuwe12,KaipioKolehmainen13,NicholsonPetraKaipio18,
ConstantinescuBessacPetraEtAl20,SimpsonBakerBuenzliEtAl22} 
for a small sample of the literature addressing
inverse problems under uncertainty.  Methods for OED in such inverse problems
have been studied
in~\cite{KovalAlexanderianStadler20,AlexanderianPetraStadlerEtAl21,
FengMarzouk19,BartuskaEspathTempone22}. The
works~\cite{KovalAlexanderianStadler20,AlexanderianPetraStadlerEtAl21} concern
optimal design of infinite-dimensional Bayesian linear inverse problems
governed by PDEs.  Specifically,~\cite{KovalAlexanderianStadler20} considers
design of linear inverse problems governed by PDEs with irreducible sources of
model uncertainty. On the other hand,~\cite{AlexanderianPetraStadlerEtAl21}
targets OED for linear inverse problems with reducible sources of uncertainty.
The efforts~\cite{FengMarzouk19,BartuskaEspathTempone22} focus on inverse
problems with finite- and low-dimensional inversion and
secondary parameters.  These articles devise sampling
approaches for estimating the expected information gain in such problems.  The
setting considered in~\cite{FengMarzouk19} is that of inverse problems with
reducible uncertainties.  The approach in~\cite{BartuskaEspathTempone22}
employs a small noise approximation that applies to problems with nuisance
parameters that have small uncertainty levels.


\paragraph{Our approach}
We focus on Bayesian nonlinear inverse problems governed by PDEs with
infinite-dimensional inversion and secondary parameters.  Traditionally, when
solving such inverse problems all the secondary model parameters are fixed at
their nominal values and the focus is on the estimation of the inversion
parameters.  Considering~\cref{equ:model}, this amounts to using the
\emph{approximate model} $\cfwd(\iparm) \defeq \afwd(\iparm, \bar\iparb)$, where
$\bar\iparb$ is some nominal value.  
In~\Cref{sec:motivation}, we consider
simpler instances of~\cref{equ:model} and illustrate the role of model
uncertainty in Bayesian inverse problems and the importance of accounting for
such uncertainties in parameter inversion and OED. 
The discussion in~\Cref{sec:motivation} 
motivates our approach for incorporating secondary
uncertainties in nonlinear Bayesian inverse problems and the corresponding OED
problems. This is done using the Bayesian approximation error
(BAE) approach~\cite{KaipioSomersalo07,KaipioKolehmainen13};
see~\Cref{sec:BAE}.

Subsequently, we build on
methods for optimal design of infinite-dimensional nonlinear inverse
problems~\cite{AlexanderianPetraStadlerEtAl16,WuChenGhattas23,Alexanderian20}
to derive an uncertainty aware OED objective. Specifically, we follow an
A-optimal design strategy where the goal is to obtain designs that minimize
average posterior variance. To cope with the non-Gaussianity of the posterior,
we rely on a Gaussian approximation to the posterior. This enables deriving 
approximate measures of posterior uncertainty that are tractable to optimize for
infinite-dimensional inverse problems; see \Cref{sec:OED_BAE}.  We then present
two approaches for formulating and computing the OED objective;
see~\Cref{sec:methods}.  The first approach uses Monte Carlo trace estimators
and the second one formulates the OED problem as an eigenvalue optimization
problem. In each case, the OED problem is formulated as a bilevel binary
PDE-constrained optimization problem. In both approaches, 
the cost of computing the OED objective, in terms of the number of PDE solves, 
is independent of the dimensions of discretized
inversion and secondary parameters.  This makes these approaches suitable for
large-scale applications.  In the present work, we rely on a greedy approach to
solve the resulting optimization problems.   As discussed
in~\Cref{sec:methods}, a greedy algorithm is especially suited for the
formulation of the OED problem in~\Cref{sec:lowrank}, as a binary
PDE-constrained eigenvalue optimization problem. 

We elaborate the proposed approach in the context of a model inverse problem
governed by an elliptic PDE in a three-dimensional domain;
see~\Cref{sec:model}.  This inverse problem, which is motivated by heat
transfer applications, concerns estimation of a coefficient field on the bottom
boundary of the domain, using sensor measurements of the temperature 
on the top boundary. The secondary uncertain parameter in this inverse problem
is the log-conductivity field, which is modeled as a random field on the
three-dimensional physical domain. Our computational results
in~\Cref{sec:numerics} demonstrate the effectiveness of the proposed strategy
in computing optimal sensor placements under model uncertainty. We also
systematically study the drawbacks of ignoring the uncertainty in the Bayesian
inversion and experimental design stages. These studies illustrate the fact
that ignoring uncertainty in OED or inference stages can lead to inferior
designs and highly inaccurate results in parameter estimation.

\paragraph{Contributions}
The contributions of this article are as follows: 
%
%
%
(1) We present an uncertainty aware formulation of the OED problem, uncertainty
aware OED objectives along with scalable methods for computing them, and an
extensible optimization framework for computing optimal designs.  These make
OED for nonlinear Bayesian inverse problems governed by PDEs with
infinite-dimensional inversion and secondary parameters feasible.
Additionally, the proposed approach enables Bayesian inversion and OED under
uncertainty within a unified framework. 
(2) We elaborate the proposed approach for an inverse problem governed by an
elliptic PDE, on a three-dimensional domain, with infinite-dimensional inversion
and secondary parameters. This is used to elucidate the implementation of our
proposed approach for design of inverse problems governed by PDEs under
uncertainty.  
(3) We present comprehensive computational studies that illustrate the
effectiveness of the proposed approach and also the importance of accounting
for modeling uncertainties in both parameter inversion and experimental design
stages.  (4) By considering simpler instances of~\cref{equ:model}, in
\Cref{sec:motivation}, we present a systematic study of the role of model
uncertainty in Bayesian inverse problems and the importance of accounting for
such uncertainties in parameter inversion and OED. That study also reveals a
connection between the BAE-based approach taken in the present work and the
method for OED in linear inverse problems under uncertainty
in~\cite{AlexanderianPetraStadlerEtAl21}.

The developments in this article point naturally to a number of extensions of
the presented methods.  We discuss such issues in~\Cref{sec:conclusion}, where
we present our concluding remarks, and discuss potential limitations of the
presented approach and opportunities for future extensions.

\section{Motivation and overview}\label{sec:motivation}
In this section, we motivate our approach for OED under uncertainty and set the
stage for the developments in the rest of the article.  After a brief coverage
of requisite notation and preliminaries in~\Cref{sec:prelim}, we begin our
discussion in~\Cref{sec:linear_additive} by considering a simple form
of~\cref{equ:model} where $\afwd$ is linear in $\iparm$ and $\iparb$.
This facilitates an intuitive study of the role of model uncertainty in
Bayesian inverse problems and OED.  We then consider nonlinear models of
varying complexity in~\Cref{sec:nonlinear} to motivate our approach for design
of inverse problems governed by nonlinear models of the form~\cref{equ:model}. 

\subsection{Preliminaries}\label{sec:prelim}
In this article, we consider inversion and secondary parameters that take
values in infinite-dimensional Hilbert spaces.  For a Hilbert
space $\hilb$, we denote the corresponding inner product by
$\ipp{\hilb}{\cdot}{\cdot}$ and the associated induced norm by $\| \cdot
\|_\hilb$; i.e., $\| \cdot \|_\hilb \defeq \ipp{\hilb}{\cdot}{\cdot}^{1/2}$.
For Hilbert spaces $\hilb_1$ and $\hilb_2$, 
$\L(\hilb_1, \hilb_2)$ denotes the space of bounded linear transformations from
$\hilb_1$ to $\hilb_2$.  The space of bounded linear operators on a Hilbert
space $\hilb$ is denoted by $\L(\hilb)$, and the subspace of bounded selfadjoint
operators is denoted by $\Lsym(\hilb)$. We let $\Lsymp(\hilb)$ denote the set of bounded
positive selfadjoint operators. The subspace 
of trace-class operators in $\L(\hilb)$ is denoted by $\Lt(\hilb)$, and
the subspace of selfadjoint trace-class operators is denoted by
$\Ltsym(\hilb)$. Also, the sets of positive and strictly positive
selfadjoint trace-class operators are denoted by $\Ltsymp(\hilb)$ and
$\Ltsympp(\hilb)$, respectively.

Throughout the article, $\GM{a}{\C}$ denotes a Gaussian measure with
mean $a$ and covariance operator $\C$.  For a Gaussian measure on 
an infinite-dimensional Hilbert space $\hilb$,
the covariance operator
$\C$ is required to be in $\Ltsymp(\hilb)$.  Herein, we consider
non-degenerate Gaussian measures; i.e., we assume that $\C \in
\Ltsympp(\hilb)$. For further details on Gaussian measures, we refer
to~\cite{DaPrato06,Stuart10}. Also, when considering measures on Hilbert
spaces, we equip these spaces with their associated Borel sigma algebra.
Throughout the article, for notational convenience, we suppress this choice of
the sigma-algebra in our notations.

The adjoint of a linear transformation
$\A \in \L(\hilb_1, \hilb_2)$,
where $\hilb_1$ and
$\hilb_2$ are (real) Hilbert spaces, is denoted by $\A^*$. Recall that
$\A^* \in \L(\hilb_2,\hilb_1)$ and 
\[
   \ipp{\hilb_2}{\A v_1}{v_2} = \ipp{\hilb_1}{v_1}{\A^* v_2}, \quad \text{for all } v_1 
\in \hilb_1, 
 v_2 \in \hilb_2. 
\]
We also recall the following basic result regarding affine transformations of 
Gaussian random variables. Let $X$ be an $\hilb_1$-valued Gaussian random variable 
with law $\GM{a}{\C}$, $\A \in \L(\hilb_1,\hilb_2)$, and $b \in \hilb_2$. Then,
the random variable $\A X + b$ is an $\hilb_2$-valued Gaussian
random variable with law $\GM{\A a + b}{\A \C \A^*}$; see~\cite{DaPrato06} for details.

\subsection{Linear models}\label{sec:linear_additive}
Consider the model
\begin{equation}\label{equ:model_simple}
    \obs = \SS\iparm + \TT\iparb + \vec\eta,
\end{equation}
where $\obs \in \R^d$ denotes measurement data, $\iparm \in \hilbm$ is the inversion parameter,
$\iparb \in \hilbb$ is the secondary uncertain parameter, 
and $\vec\eta$ is the measurement noise vector.
(The spaces $\hilbm$ and $\hilbb$ are as described in the introduction.)
This type of model, which was considered in~\cite{AlexanderianPetraStadlerEtAl21}, 
may correspond to inverse problems governed by linear PDEs with
uncertainties in source terms or boundary conditions. 
We assume
$\SS \in \L(\hilbm, \R^d)$ and 
$\TT \in \L(\hilbb, \R^d)$.
Moreover, 
we assume that $\vec\eta \sim \GM{\vec{0}}{\ncov}$
and that $\vec\eta$ is independent of $\iparm$ and $\iparb$.
We consider a Gaussian prior law $\prior = \GM{\iparprm}{\Cprior}$
for $\iparm$, and 
for the purpose of this illustrative example, 
let the \emph{secondary model uncertainty}
$\iparb$ be distributed according to a Gaussian $\mu_\iparb = \GM{\bar\iparb}{\Caux}$.
Also, we assume $\ncov = \sigma^2 \mat{I}$, with $\sigma^2$ denoting the noise level.

\paragraph{Incorporating model uncertainty in the inverse problem}
For a fixed realization of $\iparb$, estimating $\iparm$ from
\cref{equ:model_simple} is a standard problem. This also follows the 
common
practice of fixing additional model parameters at some nominal values before solving
the inverse problem. However, it is possible to account for the model 
uncertainty in this process. To see this, suppose we fix $\iparb$ at $\bar\iparb$ and consider 
the approximate model $\cfwd(\iparm) = \SS \iparm + \TT \bar\iparb$.
Note that 
\[
\underbrace{\SS\iparm + \TT\iparb}_{\text{accurate model}} = 
\underbrace{\SS\iparm + \TT\bar\iparb}_{\cfwd(\iparm)}  
+ \underbrace{\TT(\iparb-\bar\iparb)}_{\text{error } \vec\eps}.
\]
In this case, the approximation error $\vec\eps = \TT(\iparb-\bar\iparb)$ 
has a Gaussian law, 
$\vec\eps \sim \GM{\vec{0}}{\TT \Caux \TT^*}$. 
Hence, we may rewrite \cref{equ:model_simple} in terms of the approximate model
$\cfwd$ as follows:
\begin{equation}\label{equ:inaccurate_noise_model}
    \obs = \cfwd(\iparm) + \vec\nu,
\end{equation}
where $\vec\nu = \vec\eps + \vec\eta$ denotes the \emph{total error}.
In the present setting, $\vec\nu \sim \GM{\vec 0}{\TT \Caux \TT^* + \ncov}$. Note that we have
incorporated the uncertainty due to $\iparb$ in the error covariance matrix.\footnote[1]{
If the approximate model $\cfwd$ was defined
by fixing $\iparb$ at a point different from $\bar\iparb$, then the error term
$\vec\nu$ would have nonzero mean.} The present procedure for incorporating the
secondary model uncertainty into the inverse problem is a special case of the
Bayesian approximation error (BAE) approach (see~\cref{sec:BAE}).  

Since we have Gaussian prior and noise models and $\cfwd$ 
in~\cref{equ:inaccurate_noise_model} is affine, the posterior is also Gaussian
with analytic formulas for its mean and covariance operator; see
e.g.,~\cite{Stuart10}.  In particular, the posterior covariance operator is
given by 
\begin{equation}\label{equ:Cpost_lin}
   \Cpost =
\big(\SS^*\nucov^{-1}\SS + \Cprior^{-1}\big)^{-1} =  
\big(\SS^*(\TT \Caux \TT^* + \ncov)^{-1}\SS + \Cprior^{-1}\big)^{-1}.
\end{equation}
Considering, for example, the A-optimality criterion $\trace(\Cpost)$,
\cref{equ:Cpost_lin} illustrates the manner in which the uncertainty due to
use of an approximate model impacts the posterior uncertainty.

\paragraph{Interplay between measurement error and approximation error}
Note that $\TT \Caux \TT^* \in \Lsymp(\R^d)$. This operator
admits 
a spectral decomposition 
$\mat{V} \mat{\Lambda} \mat{V}^\tran$, where $\mat{V}$ is an orthogonal matrix
of eigenvectors and $\mat{\Lambda}$ is a diagonal matrix with the 
eigenvalues on it diagonal. Thus, the total error covariance matrix
can be written as 
$ 
\nucov = \mat{V} \mat{\Lambda} \mat{V}^\tran + \sigma^2 \mat{I} 
= \sum_j (\lambda_j + \sigma^2) \vec{v}_j \vec{v}_j^\tran$.
Therefore, the modes for which 
$\lambda_j \ll \sigma^2$ may be ignored.
To observe the impact of 
model uncertainty on individual observations, we note that
\[
\var\{ \nu_i \} = \vec{e}_i^\tran \nucov \vec{e}_i = 
\sum_j (\lambda_j + \sigma^2) (\vec{e}_i^\tran\vec{v}_j)^2, \quad i \in \{1, \ldots, d\},
\]
where $\vec{e}_i$'s are the standard basis vectors in $\R^d$. Thus, we see that 
only large eigenvalues contribute significantly to the total error in the $i$th measurement.

We can also consider the interplay between the spectral representation of
the error covariance and the posterior covariance operator. Specifically, 
$\Cpost =  
\big(\SS^*\nucov^{-1}\SS + \Cprior^{-1}\big)^{-1} = \Cprior^{1/2}
(\tilde\SS^*\nucov^{-1}\tilde\SS + I)^{-1}\Cprior^{1/2}$ with $\tilde\SS = \SS\Cprior^{1/2}$.
Thus, letting $\mat{E}_j = \vec{v}_j \vec{v}_j^\tran$, $j = 1, \ldots, d$, we can write
\[  
\Cpost =  \Cprior^{1/2}
          \Big[\sum_{j=1}^d (\lambda_j + \sigma^2)^{-1} \tilde\SS^* \mat{E}_j \tilde\SS + I\Big]^{-1}
          \Cprior^{1/2}. 
\]
Note that in the directions corresponding to very large eigenvalues the measurements will 
have a negligible impact on posterior uncertainty.

The above discussion indicates that model uncertainty cannot be ignored in the
inverse problem, especially when the model uncertainty overwhelms measurement
noise. The latter is also important in design of experiments.  Specifically,
some of the measurements might be completely useless due to large amount of
model uncertainty associated to the corresponding measurements. This
information needs to be accounted for in the OED problem to ensure only
measurements that are helpful in reducing posterior uncertainty are selected. 

\paragraph{Connection to post-marginalization}
In general, the BAE approach involves pre-marginalization over the secondary model
uncertainties.  This can be related to the idea of post-marginalization in the
case of linear Gaussian inverse problems.  Namely, if we consider $\iparb$ as a
reducible uncertainty that is being estimated along with $\iparm$ and
$\mu_\iparb$ as the corresponding prior law, then \cref{equ:Cpost_lin} is 
the covariance operator of the marginal posterior law of $\iparm$.
To see this, we first note that 
\begin{equation}\label{equ:SMW}
(\TT \Caux \TT^* + \ncov)^{-1} = 
\ncov^{-1} - \ncov^{-1} \TT (\Caux^{-1} + \TT^* \ncov^{-1} \TT)^{-1} \TT^* \ncov^{-1}.
\end{equation}
This relation 
can be derived by following a similar
calculation as the one in \cite[p.~536]{Stuart10}.\footnote[2]{
Note that \cref{equ:SMW} can be viewed as a special form of the Sherman--Morrison--Woodbury 
formula involving Hilbert space operators.} 
Then, we substitute \cref{equ:SMW} in \cref{equ:Cpost_lin} to obtain
\[
    \Cpost = \big[\Cprior^{-1} + \SS^* \ncov^{-1} \SS 
                           - \SS^* \ncov^{-1} \TT (\Caux^{-1} + \TT^* \ncov^{-1} \TT)^{-1} 
                                \TT^* \ncov^{-1}\SS\big]^{-1}.
\]
This $\Cpost$ is the same as the marginal posterior covariance operator
of $\iparm$ as noted in~\cite[Equation 2.4]{AlexanderianPetraStadlerEtAl21}.
Thus, for a linear Gaussian inverse problem with reducible secondary
uncertainty, the posterior covariance operator obtained using
the BAE approach equals the marginal posterior covariance operator of
$\iparm$ obtained following joint estimation of $\iparm$ and $\iparb$.
%
%

\subsection{Nonlinear models}
\label{sec:nonlinear}
The linear model \cref{equ:model_simple} can be  
generalized in the following ways:
\begin{align}
   &\text{additive model, linear in $\iparm$, nonlinear in $\iparb$}:
   \afwd(\iparm, \iparb) = \SS\iparm + \TT(\iparb); \label{equ:ALN}\\
   &\text{additive model, nonlinear in $\iparm$, linear in $\iparb$}:
   \afwd(\iparm, \iparb) = \SS(\iparm) + \TT\iparb; \label{equ:ANL}\\ 
   &\text{additive model, nonlinear in both $\iparm$ and $\iparb$}:
   \afwd(\iparm, \iparb) = \SS(\iparm) + \TT(\iparb); \label{equ:ANN}\\ 
   &\text{nonadditive model, nonlinear in both $\iparm$ and $\iparb$.} \label{equ:NNN}
\end{align}
While the cases \cref{equ:ALN}--\cref{equ:ANN} might be of independent
interest, our focus in this article is on the general case \cref{equ:NNN}. 
However, items \cref{equ:ALN}--\cref{equ:ANN} do serve to illustrate
some of the key challenges.

In the case of \cref{equ:ALN}, one can repeat the steps leading to
\cref{equ:inaccurate_noise_model}, except $\vec\eps$ will not be Gaussian
anymore and therefore the distribution of $\vec\nu$ will not be known analytically.  In that case, one
may obtain a Gaussian approximation to $\vec\eps$ either by fitting a Gaussian
to $\vec\eps$ or by using a linear approximation of $\TT(\iparb)$.  Then, one may
obtain a Gaussian posterior, where one also relies on the linearity of $\SS$.  On
the other hand, in the case of \cref{equ:ANL}, the total error $\vec\nu$ will
be Gaussian as before, but due to nonlinearity of $\SS(\iparm)$ the posterior
will not be Gaussian. The latter leads to one of the fundamental challenges in OED of
nonlinear inverse problem---defining a suitable OED objective whose
optimization is tractable. The more complicated cases of
\cref{equ:ANN}---\cref{equ:NNN} inherit the challenges corresponding to the
previous cases.  

In the rest of this article, we  build on the BAE approach to incorporate the
uncertainty in $\iparb$ in inverse problems governed by nonlinear models of the
type~\cref{equ:NNN}. The uncertainty in $\iparb$ will be assumed irreducible,
and in general, $\iparb$ will not be assumed to follow a Gaussian law.  All
that we require is the ability to generate samples of $\iparb$.  Following the
BAE approach, we approximate the approximation error $\vec\eps$ with a
Gaussian.  This enables incorporating the model uncertainty in the data
likelihood; see \Cref{sec:BAE}.  To cope with non-Gaussianity of
the posterior, we rely on a Gaussian approximation to the posterior, to derive
an uncertainty aware OED objective that is tractable to evaluate and optimize
for infinite-dimensional inverse problems; see \Cref{sec:OED_BAE} for the
definition of the OED objective and \Cref{sec:methods} for computational
methods.

\section{Infinite-dimensional Bayesian inverse problems under uncertainty}\label{sec:BAE}
We consider the inverse problem of inferring a parameter $\iparm$ from a model
of the form~\cref{equ:model}, where $\afwd: \hilbm \times \hilbb \to \R^d$ is a
parameter-to-observable map that in general is nonlinear in both arguments.
We focus on problems
where $\afwd$ is defined as a composition of an  
observation operator and a PDE solution operator.  
The model $\afwd$ is assumed to be Fr\'{e}chet differentiable in $\iparm$, 
at $\iparb = \bar\iparb$, where $\bar\iparb \in \hilbb$ is a nominal value. 
As before, we employ 
a Gaussian noise model, $\vec\eta \sim \GM{\vec{0}}{\ncov}$, 
a Gaussian prior
law $\prior = \GM{\iparprm}{\Cprior}$ for $\iparm$, and assume $\vec\eta$ is 
independent of $\iparm$ and $\iparb$.
The prior 
induces the Cameron--Martin space $\CM = \ran(\Cprior^{1/2})$, which is endowed with the 
inner product~\cite{DaPrato06,DashtiStuart17}
\[
\cip{a}{b} = \mip{\Cprior^{-1/2} a}{\Cprior^{-1/2} b}, \quad a, b \in \CM,
\]
where $\mip{\cdot}{\cdot}$ is the inner product on the parameter space
$\hilbm$. 

To account for model uncertainty (due to $\iparb$) in the inverse problem, we
rely on the BAE approach~\cite{KaipioSomersalo07,KaipioKolehmainen13}, which we
explain next.
We fix the secondary parameter to $\bar\iparb$ and consider 
the approximate (also known as inaccurate, reduced order, or surrogate) model
\begin{equation}\label{equ:model_approx}
    \cfwd(\iparm) = \afwd(\iparm, \bar\iparb).
\end{equation}
As mentioned before, this is typically what is done in practice where the
secondary model parameters are fixed at some (possibly well-justified) nominal values. In the BAE
approach, we quantify and incorporate errors due to the use of this
approximate model in the Bayesian inverse problem analogously to what was done 
in~\Cref{sec:linear_additive}. Namely, we consider
\begin{equation}\label{equ:BAE_model}
\obs = \cfwd(\iparm) + \underbrace{\afwd(\iparm,\iparb)-\cfwd(\iparm)}_{\text{approximation error } 
     \vec\eps} + \vec{\eta} 
     = \cfwd(\iparm) + \vec{\nu}(\iparm,\iparb),
\end{equation}
where $\vec{\nu}(\iparm,\iparb) = \vec\eps(\iparm,\iparb) + \vec\eta$ is the total error.
In the BAE framework, the approximation
error $\vec{\eps}$, is approximated as a conditionally
Gaussian random variable. That is, the distribution of $\vec\eps | \iparm$ is
assumed to be Gaussian.
In the present work, we employ
the so-called {\em enhanced error model}~\cite{KolehmainenTarvainenArridgeEtAl11,KaipioKolehmainen13,
BabaniyiNicholsonVillaEtAl21},
that ignores the correlation between $\vec\eps$ and $\iparm$ and 
approximates the law of $\vec\eps$ as a Gaussian $\vec\eps \sim 
\mathcal{N}(\vec{\eps}_0,\ecov)$ with 
\begin{equation}\label{equ:error_statistics}
\begin{aligned}
    \vec\eps_0 &= \int_\hilbm\int_\hilbb 
           \vec\eps(\iparm,\iparb) \, \prior(d\iparm)\, \mu_\iparb(d\iparb),
\\
    \ecov &= \int_\hilbm\int_\hilbb 
       \big(\vec\eps(\iparm,\iparb) - \vec\eps_0\big)
\big(\vec\eps(\iparm,\iparb) - \vec\eps_0\big)^\tran \, \prior(d\iparm)\, \mu_\iparb(d\iparb).
\quad 
\end{aligned}
\end{equation}
In general, the approximation errors can be (highly) correlated with the parameters~\cite{NicholsonPetraVilla23,KaipioKolehmainen13}. However, ignoring the correlation between $\vec\eps$ and $\iparm$ (i.e., employing the enhanced error model) is typically viewed as a conservative (safe) approximation as it is analogous to approximating the conditional distribution of $\vec\eps | \iparm$ with the marginal distribution of $\vec\eps$, which cannot reduce variance~\cite[Section 3.4]{KaipioSomersalo05}. On the other hand, employing the enhanced error model can significantly reduce the costs associated with computing the mean and covariance operator of $\vec\eps$~\cite{KaipioKolehmainen13} which are in practice computed via sampling;
see \Cref{sec:methods}.

With these approximations, and by our assumption on the measurement noise (which is independent of 
the parameters), the total error $\vec\nu$ is modeled by a Gaussian 
$\GM{\vec\eps_0}{\nucov}$, where $\nucov = \ecov + \ncov$. 
Using this approximate noise model along with 
the approximate model $\cfwd$ we arrive at the following 
data likelihood:
\begin{equation}\label{eq:BAElike}
\like(\obs\vert\iparm) \propto 
\exp\Big\{-\frac{1}{2} \big(\obs - \cfwd(\iparm) - \vec{\eps}_0\big)^\tran
\nucov^{-1}\big(\obs - \cfwd(\iparm) - \vec{\eps}_0\big)\Big\}.
\end{equation}
With the prior measure in place, and using this BAE-based data likelihood, 
we can state the Bayes formula~\cite{Stuart10},
\[
\frac{d\postm}{d\prior} \propto \like(\obs | \iparm).
\]
To make computations involved in design of large-scale inverse 
problems tractable, we rely on a local 
Gaussian approximation to the posterior. Namely, we use $\postmG = \GM{\iparmap^\obs}{\Cpost^\obs}$,
where $\iparmap^\obs$ is the maximum a posteriori probability (MAP)
point and $\Cpost^\obs$ is an approximate posterior 
covariance operator, described below. The MAP point is given by
\begin{equation}\label{equ:map_estimation_problem}
\iparmap^\obs = \argmin_{\iparm\in\mathscr{E}} \mathcal{J}(\iparm)
\defeq  \frac12 \big(\obs - \cfwd(\iparm) - \vec{\eps}_0\big)^\tran\nucov^{-1}
                 \big(\obs - \cfwd(\iparm) - \vec{\eps}_0\big)
       + \frac12 \cip{\iparm - \iparprm}{\iparm - \iparprm}.
\end{equation}
For the approximate posterior covariance operator $\Cpost^\obs$, we use 
\begin{equation}\label{equ:post_cov_GN}
    \Cpost^\obs = \big(\cfwd_m(\iparmap^\obs)^*\nucov^{-1}
    \cfwd_m(\iparmap^\obs) + \Cprior^{-1}\big)^{-1},
\end{equation}
where $\cfwd_m(\iparmap^\obs)$ is the Fr\'{e}chet derivative of $\cfwd$ evaluated at $\iparmap^\obs$.

Note that the true posterior $\postm$ is equivalent to the prior measure.  It is
also possible to show that the Gaussian approximation is equivalent to $\prior$,
as well. This fact, which is made precise below, is important
in justifying the use of this Gaussian approximation for defining an approximate
measure of posterior uncertainty. Namely, to define a notion of uncertainty 
reduction, it is important that our surrogate for the posterior measure is
absolutely continuous with respect to our reference measure, which is given by
the prior.  The equivalence of $\postmG$ to $\prior$ may be inferred from the more general
developments in~\cite{PinskiSimpsonStuartEtAl15}. However, we present an
accessible argument that applies to the specific problem setup under study in
the present work. We first present the following result. 
\begin{proposition} 
\label{prp:flin}
Consider the linearized forward model
\begin{equation}\label{equ:flin}
  \flin(m) = \cfwd(\iparmap) + \cfwd_m(\iparmap)(\iparm - \iparmap), \quad \iparm \in \hilb, 
\end{equation} 
where we have suppressed 
the dependence of $\iparmap$ to $\obs$ for notational convenience. 
Define the data model 
\begin{equation}\label{equ:linear_data_model}
  \obs = \flin(\iparm) + \vec\nu,
\end{equation}  
where $\vec\nu \sim \GM{\vec{\eps}_0}{\nucov}$. Consider 
the Bayesian inverse problem of 
estimating $\iparm$ using~\cref{equ:linear_data_model} and  
the prior $\prior = \GM{\iparprm}{\Cprior}$.
The corresponding posterior measure is given 
by $\postmG$.
\end{proposition}

\begin{proof}
Using the theory of Bayesian linear inverse problems in 
a Hilbert space~\cite{Stuart10}, the solution of the linear inverse problem 
under study 
yields a Gaussian posterior $\postmSimp^{\text{lin}} = \GM{\iparmap^\text{lin}}{\Cpost}$.
The covariance operator $\Cpost$ is as in~\cref{equ:post_cov_GN} and 
$\iparmap^{\text{lin}}$ is found by minimizing 
\begin{equation}
\JL(\iparm)
\defeq  \frac12 \big(\obs - \flin(\iparm) - \vec{\eps}_0\big)^\tran\nucov^{-1}
                 \big(\obs - \flin(\iparm) - \vec{\eps}_0\big)
       + \frac12 \cip{\iparm - \iparprm}{\iparm - \iparprm},
\end{equation}
over the Cameron--Martin space $\CM$. To complete the proof, we 
show $\iparmap^{\text{lin}} = \iparmap$.
Note that $\JL$ is a strictly convex quadratic functional with a unique global
minimizer. We consider the Euler--Lagrange equation for the present optimization problem. 
Note that the Fr\'{e}chet derivative of $\flin$ is given by 
$\flin_m = \cfwd_m(\iparmap)$.  
It is straightforward to see 
\[
  \frac{d}{d\eps}\Big|_{\eps = 0} \JL(m + \eps \ut{m}) 
  = \mip{\ut{m}}{\cfwd_m(\iparmap)^*\nucov^{-1}(\vrl)} + \cip{\ut{m}}{\iparm - \iparprm},
\]
for every $\ut{m} \in \CM$.
Thus, 
recalling $\flin(\iparmap) = \cfwd(\iparmap)$, we note that 
for every $\ut{m} \in \CM$,
\[
\begin{aligned}
  \frac{d}{d\eps}\Big|_{\eps = 0} \JL(\iparmap + \eps \ut{m}) 
  &= \mip{\ut{m}}{\cfwd_m(\iparmap)^*\nucov^{-1}(\vrfmap)} + \cip{\ut{m}}{\iparmap - \iparprm}
  \\
  &= \frac{d}{d\eps}\Big|_{\eps = 0} \mathcal{J}(\iparmap + \eps\ut{m}) = 0.
\end{aligned}
\]
The last equality follows from the fact that $\iparmap$ is a minimizer of $\mathcal{J}$
in~\cref{equ:map_estimation_problem}. Hence, $\iparmap$ is the unqiue global minimizer of 
$\JL$. Therefore, $\iparmap^{\text{lin}} = \iparmap$. 
\end{proof}
\Cref{prp:flin} shows that $\postmG$ is the posterior measure 
corresponding to the linearized Bayesian inverse problem considered 
in the result. Therefore, 
by construction, $\postmG$ is equivalent to $\prior$.

\section{A-optimal experimental design under uncertainty}\label{sec:OED_BAE}
We consider inverse problems in which measurement data
are collected at a set of sensors.
In this case, the OED problem seeks to find an
optimal placement of sensors. Specifically, we formulate the OED problem as
that of selecting an optimal subset from a set of candidate sensor locations,
which is a common approach; see e.g.,~\cite{Ucinski05,
HaberHoreshTenorio08,AlexanderianPetraStadlerEtAl14}.
To make matters concrete, we begin by fixing a set of points $\{ \vec{x}_1, \vec{x}_2, 
\ldots, \vec{x}_\Ns\}$ that indicate the candidate sensor locations.  We then assign a
binary weight $w_i$ to each candidate location $\vec{x}_i$; a weight of one
indicates that a sensor will be placed at the corresponding candidate
location.  The binary vector $\vec w \in \{0, 1\}^\Ns$ thus fully specifies an
experimental design in the present setting.
Note that specification of a set of candidate sensor locations will in general 
depend on the specific application at hand. For example, placing sensors in 
certain parts of the domain might be impractical or impossible. Also, in
problems with Dirichlet boundary conditions, placing sensors on or very close to
such boundaries will be a waste of resources. 

\subsection{Design of the Bayesian inverse problem} The design $\vec w$ enters
the formulation of the Bayesian inverse problem through the data likelihood.
This requires additional care in the present work because the total error
covariance matrix $\nucov$ is non-diagonal.  We follow the setup
in~\cite{LiuChepuriFardad16} to incorporate $\vec w$ in the Bayesian inverse
problem formulation.
For a binary design vector $\vec{w} \in \{0, 1\}^\Ns$, we define the matrix
$\PPsi$ as submatrix of $\mathrm{diag}(\vec w)$ with rows corresponding to the
zero weights removed. Thus, given a generic measurement vector $\vec{d} \in
\R^\Ns$, $\PPsi \vec{d}$ returns the measurements corresponding to active
sensors. For $\vec{d} \in \R^\Ns$, we use the notation $\vec{d}_\vec{w} = \PPsi
\vec{d}$.

Next, we describe how a design vector $\vec w$ enters the Bayesian inverse
problem, within the BAE framework. 
For a given $\vec w$, we consider the model $\yhat = \PPsi (\cfwd(\iparm) + \vec\nu)$. 
Using this model leads to the following, $\vec w$-dependent, 
data likelihood
\begin{equation}\label{eq:BAElikeW}
\like(\obs\vert\iparm) \propto
\exp\Big\{-\frac{1}{2} \big(\obs - \cfwd(\iparm) - \vec{\eps}_0\big)^\tran
\WW(\vec w)
\big(\obs - \cfwd(\iparm) - \vec{\eps}_0\big)\Big\},
\end{equation}
with 
\begin{equation}\label{equ:Sigma}
\WW(\vec w) = \PPsi^\tran \nucovw^{-1} \PPsi, \quad \text{where} \quad
\nucovw =\PPsi\nucov\PPsi^\tran.
\end{equation}

Consequently, we obtain the following $\vec w$-dependent Gaussian approximation to 
the posterior, $\postmGw = \GM{\iparmap^\yhat}{\Cpost^\yhat}$, 
where $\iparmap^\yhat$ is obtained by minimizing 
\begin{equation}\label{equ:costw}
\mathcal{J}_{\vec{w}}(\iparm; \obs) =
         \frac12 \big(\obs - \cfwd(\iparm) - \vec{\eps}_0\big)^\tran
                 \WW(\vec w) 
                 \big(\obs - \cfwd(\iparm) - \vec{\eps}_0\big)
       + \frac12 \cip{\iparm - \iparprm}{\iparm - \iparprm},
\end{equation}
and 
\begin{equation}\label{equ:Cpostw}
 \Cpost^\yhat = \big(\cfwd_m(\iparmap^\yhat)^*
    \WW(\vec w) 
    \cfwd_m(\iparmap^\yhat) + \Cprior^{-1}\big)^{-1}.
\end{equation}

Note that in practical computations, 
the cost function $\mathcal{J}_{\vec w}$ in~\cref{equ:costw} is 
implemented as  
\[
\mathcal{J}_\vec{w}(\iparm; \obs) =
         \frac12 \big(\yhat - \PPsi\cfwd(\iparm) - \PPsi\vec{\eps}_0\big)^\tran 
                 \nucovw^{-1} 
                 \big(\yhat - \PPsi\cfwd(\iparm) - \PPsi\vec{\eps}_0\big)
       + \frac12 \cip{\iparm - \iparprm}{\iparm - \iparprm},
\]
with $\nucovw$ as in~\cref{equ:Sigma}.
This amounts to using the data from the active sensors and removing the
rows and columns corresponding to inactive sensors from
the error covariance matrix $\nucov$.

\subsection{The design criterion}
In the present work, we follow an A-optimal design strategy, where the goal is
to find designs that minimize the average posterior variance.  Generally,
computing the average posterior variance for a Bayesian nonlinear inverse
problem is computationally challenging.  We follow the developments
in~\cite{AlexanderianPetraStadlerEtAl16} to define a Bayesian A-optimality
criterion in the case of nonlinear inverse problems.  

Given a data vector $\vec y \in \R^\Ns$, an approximate measure of posterior uncertainty
is provided by $\trace(\Cpost^\yhat)$. However, when solving the OED problem data is not available.
Indeed, it is the goal of the OED problem to specify how data should be collected. To overcome this,
we follow the general approach in Bayesian OED of nonlinear inverse problems, where we consider  
$\mathbb{E}_\obs \{ \trace(\Cpost^\yhat) \}$,
with $\mathbb{E}_\obs$ denoting expectation with respect to the set of all likely data. 
We compute this 
expectation by using the information available in the Bayesian inverse problem and the 
information regarding the distribution of the model uncertainty.
Namely, following the approach in~\cite{AlexanderianPetraStadlerEtAl16}, we use the 
design criterion
\begin{equation}\label{equ:design_criterion}
\Phi(\vec w) = \int_\hilbb 
\int_\hilbm \int_{\R^\Ns} \trace(\Cpost^\yhat) \, \noise( \afwd(\iparm, \iparb) - \obs)d\obs 
                         \, \prior(dm)\, \mu_\iparb(d\iparb),
\end{equation}
where $\noise$ is the probability density function (pdf) of the noise distribution 
$\GM{\vec{0}}{\ncov}$.
In practice, $\Phi(\vec w)$ will be computed via sample
averaging. Specifically, we use 
\begin{equation}\label{equ:design_criterion_mc}
   \Phi_\Nd(\vec w) = \frac{1}{\Nd} \sum_{i=1}^\Nd \trace(\Cpost^{\yhati}),
\end{equation}
where the \emph{training} data samples $\yhati$ are given by 
$\yhati = \PPsi(\afwd(\iparm^i,\iparb^i) + \vec\eta^i)$,
with $\{ (\iparm^i, \iparb^i, \vec\eta^i) \}_{i=1}^\Nd$
a sample set from the product space $(\hilbm, \prior) \otimes (\hilbb,
\mu_\iparb) \otimes (\R^\Ns, \GM{\vec{0}}{\ncov})$.  
In large-scale
applications, typically a small $\Nd$ can be afforded. However, in this
context, typically a modest $\Nd$ enables computing good quality optimal
designs. This is also demonstrated in the computational results in the present
work.

Note that, for each $i \in \{1, \ldots, \Nd\}$, 
\[
\trace(\Cpost^{\yhati}) = \sum_{j=1}^\infty \mip{\Cpost^{\yhati}e_j}{e_j},
\] 
where $\{ e_j \}_{j=1}^\infty$ is a complete orthonormal set in $\hilbm$. Inserting 
this in~\cref{equ:design_criterion_mc} and using the definition of $\Cpost^{\yhati}$, 
we have
\begin{subequations}
\label{equ:design_criterion_mc_infsum_def}
\begin{equation}
   \label{equ:design_criterion_mc_infsum}
   \Phi_\Nd(\vec w) = \frac{1}{\Nd} \sum_{i=1}^\Nd \sum_{j=1}^\infty 
   \mip{c_{ij}}{e_j},
\end{equation}
   where, for $i \in \{1, \ldots, \Nd\}$ and $j \in \mathbb{N}$,
\begin{align}
   &\iparmap^\yhati= \argmin_\iparm \mathcal{J}_{\vec{w}}(\iparm; \obs^i),
   \label{equ:map_estimation}
\\
   &\big(\cfwd_m(\iparmap^\yhat)^*
    \WW(\vec w) \cfwd_m(\iparmap^\yhat) + \Cprior^{-1}\big)c_{ij} = e_j.
   \label{equ:hess_sys}
\end{align}
\end{subequations}

Computing the OED objective as defined above is not practical.  However,
\cref{equ:design_criterion_mc_infsum_def} provides insight into the key challenges
in computing the OED objective. In the first place, \cref{equ:map_estimation}
is a challenging PDE-constrained optimization problem whose solution is the MAP point
$\iparmap^\yhati$.  Moreover, upon discretization, \cref{equ:hess_sys} will be
a high-dimensional linear system with the discretization of the operator
$\big(\cfwd_m(\iparmap^\yhat)^* \WW(\vec w) \cfwd_m(\iparmap^\yhat) + \Cprior^{-1}\big)$ as its coefficient matrix.  Such
systems can be tackled with Krylov iterative methods that require the
application of the coefficient matrix on vectors.  In \Cref{sec:methods}, we
present two approaches for efficiently computing the OED objective: one
approach uses randomized trace estimation and the other utilizes low-rank
approximation of $\cfwd_m(\iparmap^\yhat)^* \WW(\vec w) 
\cfwd_m(\iparmap^\yhat)$. These approaches rely on scalable optimization
methods for \cref{equ:map_estimation} as well as adjoint based gradient and
Hessian apply computation.

\subsection{The optimization problem for finding an A-optimal design}
We state the OED problem of selecting the
best $K$ sensors, with $K \leq \Ns$, as follows:
\begin{equation}\label{equ:optimOED}
    \begin{aligned}
    &\min_{\vec{w} \in \{0, 1\}^\Ns} \Phi_\Nd(\vec w) \\
    &\quad\text{s.t.\,} \sum_{\ell=1}^\Ns w_\ell = K.
    \end{aligned}
\end{equation}
This is a challenging binary optimization problem. One possibility 
is to pursue a relaxation strategy~\cite{LiuChepuriFardad16,AlexanderianPetraStadlerEtAl14,
AlexanderianPetraStadlerEtAl16} to enable
gradient-based 
optimization with design weights $w_i \in [0, 1]$.
A practical alternative is to follow a greedy approach to find an 
approximate solution to \cref{equ:optimOED}. Namely, we place  
sensors one at a time. In each step of a greedy algorithm, we select
the sensor that provides the largest decrease in the value of the OED objective
$\Phi_\Nd$.
While the solutions obtained using a greedy algorithm are suboptimal in
general, greedy approaches have shown good performance in many sensor placement
problems; see,
e.g.,~\cite{KrauseSinghGuestrin08,ShulkindHoreshAvron18,
JagalurMarzouk21,AlexanderianPetraStadlerEtAl21}.  In the present work,
we follow a greedy approach for finding approximate solutions
for~\cref{equ:optimOED}. The effectiveness of this approach is demonstrated in
our computational results in~\Cref{sec:numerics}.

\section{Computational methods}\label{sec:methods}
We begin this section with a brief discussion on computing the 
BAE error statistics in~\Cref{sec:baestats}.  We then detail our proposed
methods for computing the OED objective in~\Cref{sec:oed_obj_compute}.  The
greedy approach for computing optimal designs is outlined in~\Cref{sec:greedy}.
Then, we discuss the computational cost of OED objective evaluation, using our
proposed methods, as well as the greedy procedure in~\Cref{sec:cost}.

\subsection{Estimating approximation error statistics}\label{sec:baestats}
As discussed in \Cref{sec:BAE}, the mean and covariance of 
the approximation error in \cref{equ:error_statistics} will be 
approximated via Monte Carlo sampling. Specifically, we begin by drawing  
samples $\{ (\iparm^i, \iparb^i) \}_{i=1}^\Ns$ 
in $(\hilbm \times \hilbb, \prior \otimes \mu_\iparb)$ 
and compute 
\begin{equation}\label{equ:BAE_comp}
        \widehat{\vec\eps}_0 = \frac1\Nsamp \sum_{i=1}^\Nsamp\vec\eps^i, \quad
        \ecovhat = \frac{1}{\Nsamp-1} \sum_{i=1}^\Nsamp (\vec\eps^i - \widehat{\vec\eps}_0)
                                                         (\vec\eps^i - \widehat{\vec\eps}_0)^\tran,
\quad \text{where } 
    \vec\eps^i = 
      \afwd(\iparm^i, \iparb^i) - \cfwd(\iparm^i).
\end{equation}
Subsequently, we 
use $\vec\eps_0 \approx \widehat{\vec\eps}_0$ and $\ecov \approx
\ecovhat$.  The computational cost of this process is $2\Nsamp$ model
evaluations. Note, however, that these model evaluations are done a
priori, in parallel, and can be used for both the OED problem and solving the
inverse problem.  Typically, only a modest sample size is
sufficient for approximating the mean and covariance of the 
model error~\cite{NicholsonPetraKaipio18,BabaniyiNicholsonVillaEtAl21}. 
Specifically, as noted in \Cref{sec:motivation},
only dominant modes of the error covariance matrix need to be
resolved. 
%
%
Note also that (a subset of) the model evaluations in \cref{equ:BAE_comp}
may be reused in generation of training data needed for computation of the
OED objective.

\subsection{Computation of the OED objective}\label{sec:oed_obj_compute}
In this section, we present scalable computational methods for computing the
OED objective. Specifically, we present two methods: (i) a method based on the
use of randomized trace estimators (\Cref{sec:trace_est}) and (ii) a method
based on low-rank spectral decompositions (\Cref{sec:lowrank}).  As discussed
further below, the latter is particularly suited to our overall approach in the
present work.  Therefore, we primarily focus on the method based on low-rank
spectral decompositions, which we also fully elaborate in the context of a
model inverse problem (see \Cref{sec:model} and \Cref{sec:numerics}).  However,
the first method does have its own merits and is included to provide an
alternative approach.  The relative benefits and computational complexity of
these methods are discussed in \Cref{sec:cost}.

Before we proceed further, we define some notations that simplify 
the discussions that follow. 
For $i \in \{1, \ldots, \Nd\}$,
we define 
\begin{equation}\label{equ:Hi}
\H^i(\vec w) = \cfwd_m(\iparmap^\yhati)^*\WW(\vec w)\cfwd_m(\iparmap^\yhati)
\quad \text{and}\quad
\HT^i(\vec w) = \Cprior^{1/2}\H^i(\vec w)\Cprior^{1/2}. 
\end{equation}
Note that 
the operator $\H^i$ is the so-called Gauss--Newton Hessian of the data mistfit
term in the definition of $\mathcal{J}_{\vec{w}}(\iparm;\obs^i)$ in~\cref{equ:costw}, 
evaluated at $\iparmap^\yhati$.

\subsubsection{Monte Carlo trace estimator approach}
\label{sec:trace_est}
We begin by deriving a randomized (Monte Carlo) trace estimator for the
posterior covariance operator~\cref{equ:Cpostw}.  This is facilitated by the
following technical result.

\begin{proposition}
\label{prp:trace}
Let $\C \in \Ltsympp(\hilbm)$ and 
$\mathcal{K} \in \L(\hilbm)$, and 
consider the Gaussian measure $\mu = \GM{0}{\C}$ on $\hilbm$. Then, 
$\int_\hilbm \mip{\K z}{z}\, \mu(dz) = \trace(\C^{1/2} \K \C^{1/2})$.
\end{proposition}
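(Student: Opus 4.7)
My plan is to reduce both sides to the same series in a spectral basis of $\C$. Since $\C \in \Ltsympp(\hilbm)$, the spectral theorem furnishes a complete orthonormal basis $\{e_j\}_{j\geq 1}$ of $\hilbm$ with $\C e_j = \lambda_j e_j$, where $\lambda_j > 0$ and $\sum_j \lambda_j = \trace(\C) < \infty$. The defining property of the covariance operator,
\[
\int_\hilbm \mip{u}{z}\,\mip{v}{z}\,\mu(dz) = \mip{\C u}{v}, \quad u, v \in \hilbm,
\]
specializes on the eigenbasis to $\int_\hilbm \mip{e_j}{z}\mip{e_i}{z}\,\mu(dz) = \lambda_j\,\delta_{ij}$, and this will be the main computational tool.

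I would then handle the infinite-dimensional sum by truncation. Setting $z_N = \sum_{j\leq N}\mip{z}{e_j}\,e_j$, continuity of the bilinear form $(u,v)\mapsto\mip{\K u}{v}$ gives $\mip{\K z_N}{z_N} \to \mip{\K z}{z}$ pointwise, with the uniform bound $|\mip{\K z_N}{z_N}| \leq \|\K\|\,\|z\|^2$ coming from $\|z_N\|\leq\|z\|$. Since $\int_\hilbm\|z\|^2\,\mu(dz)=\trace(\C)<\infty$, dominated convergence lets me pass the integral through the limit in $N$. For each fixed $N$, expanding the inner product yields the finite sum $\mip{\K z_N}{z_N} = \sum_{i,j\leq N}\mip{z}{e_j}\mip{z}{e_i}\mip{\K e_j}{e_i}$, and term-by-term integration using the covariance identity above collapses this cleanly to $\sum_{j\leq N}\lambda_j\mip{\K e_j}{e_j}$.

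On the trace side, $\C^{1/2}\K\C^{1/2}$ lies in the trace class since $\C^{1/2}$ is Hilbert--Schmidt (as $\C$ is trace class) and $\K$ is bounded, so the right-hand side is well defined. Because $\{e_j\}$ also diagonalizes $\C^{1/2}$ via $\C^{1/2}e_j = \sqrt{\lambda_j}\,e_j$, computing the trace in this basis yields
\[
\trace(\C^{1/2}\K\C^{1/2}) = \sum_{j\geq 1}\mip{\K\,\C^{1/2}e_j}{\C^{1/2}e_j} = \sum_{j\geq 1}\lambda_j\mip{\K e_j}{e_j},
\]
which matches the $N\to\infty$ limit obtained above, completing the identification.

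The only genuinely nontrivial step is justifying the sum--integral exchange, and the truncation plus dominated-convergence argument sketched above dispatches it cleanly thanks to the uniform quadratic bound and the finiteness of $\trace(\C)$; everything else is bookkeeping with the spectral decomposition and the covariance identity.
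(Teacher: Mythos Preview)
Your argument is correct. Both your proof and the paper's reduce each side to the common series $\sum_j \lambda_j\,\mip{\K e_j}{e_j}$ in the spectral basis of $\C$, and the computation on the trace side is essentially identical. The difference lies in how the integral is handled: the paper invokes an external result (the expectation-of-a-quadratic-form formula, yielding $\int_\hilbm \mip{\K z}{z}\,\mu(dz)=\trace(\C\K)$) and then shows $\trace(\C\K)=\trace(\C^{1/2}\K\C^{1/2})$ via the eigenbasis, whereas you prove the integral identity from scratch by truncation and dominated convergence, using only the defining covariance relation of $\mu$. Your route is more self-contained and elementary, at the price of the extra DCT bookkeeping; the paper's route is shorter but relies on a cited lemma.
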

\begin{proof}
By the assumptions on $\C$ and $\K$, we have that
both $\C\K$ and $\C^{1/2} \K \C^{1/2}$
are trace-class. 
Also, by the formula 
for the expectation of 
a quadratic form in the infinite-dimensional Hilbert space setting~\cite[Lemma~1]{AlexanderianGloorGhattas16}, 
we have that $\int_\hilbm \mip{\K z}{z}\, \mu(dz) = \trace(\C\K)$. To finish the proof, we let
$\{e_j\}_{j=1}^\infty$ be the (orthonormal) basis of the eigenvectors of $\C$ with 
corresponding (real, positive) eigenvalues $\{ \lambda_j \}_{j=1}^\infty$,
and note
$
    \trace(\C\K) = \sum_j \mip{\C\K e_j}{e_j} 
    = \sum_j  \lambda_j \mip{\K e_j}{e_j} 
    = \sum_j \mip{\K \C^{1/2}e_j}{\C^{1/2} e_j} 
    = \sum_j \mip{\C^{1/2}\K \C^{1/2}e_j}{e_j}
    = \trace(\C^{1/2}\K \C^{1/2}). 
$
\end{proof}

Next, note that we can write $\Cpost^\yhati$ as 
\begin{equation}
\Cpost^\yhati = \big(\cfwd_m(\iparmap^\yhati)^*\WW(\vec w)\cfwd_m(\iparmap^\yhati) + \Cprior^{-1}\big)^{-1}
=
\Cprior^{1/2} 
 \big(\HT^i(\vec w) + I\big)^{-1} \Cprior^{1/2}.
\end{equation}
Using this and letting $\C = \Cprior$ and $\K = \big(\HT^i(\vec w) + I\big)^{-1}$ 
in~\Cref{prp:trace}, we have 
$\trace(\Cpost^\yhati) = \int_\hilbm \mip{\big(\HT^i(\vec w)+I\big)^{-1} z}{z}\, \mu(dz)$.
Approximating the integral on the right hand side
via sampling, we obtain 
\[
\trace(\Cpost^\yhati) \approx \frac1\Ntr\sum_{j=1}^\Ntr 
\mip{\big(\HT^i(\vec w) + I\big)^{-1} z_j}{z_j},
\]
where $\{ z_j \}_{j=1}^\Ntr$ are draws from the measure $\mu = \GM{0}{\Cprior}$.
This randomized (Monte Carlo) trace estimator allows 
approximating the OED objective~\cref{equ:design_criterion_mc}, 
as follows.
\begin{subequations}
\label{equ:design_criterion_mc_trace_est_def}
\begin{equation}
\label{equ:design_criterion_mc_trace_est}
\PhiNdTR(\vec w) = \frac{1}{\Ntr\Nd} \sum_{i=1}^\Nd \sum_{j=1}^{\Ntr}
   \mip{c_{ij}}{z_j},
\end{equation}
where, for $i \in \{1, \ldots, \Nd\}$ and $j \in \{1, \ldots, \Ntr\}$,
\begin{align}
   &\iparmap^\yhati = \argmin_\iparm \mathcal{J}_{\vec{w}}(\iparm; \obs^i),
   \label{equ:map_estimation_trace_est}
\\
   &\big(\HT^i(\vec w) 
    + I\big)c_{ij} = z_j.
   \label{equ:hess_sys_trace_est}
\end{align}
\end{subequations}

The major computational challenge in computing $\PhiNdTR(\vec w)$ is solving
the MAP estimation problems in~\cref{equ:map_estimation_trace_est}.
These \emph{inner} optimization problems can be solved efficiently using an
inexact Newton-Conjugate Gradient (Newton-CG) method. The required first and second order
derivatives can be obtained efficiently using adjoint-based gradient and
Hessian apply computation.  The Hessian solves in \cref{equ:hess_sys_trace_est}
are also done using (preconditioned) CG. This only requires the action of
$\HT^i(\vec w)$ on vectors, which can be done using the adjoint method. 
A detailed study of the computational cost of computing $\PhiNdTR(\vec w)$ 
is provided in~\Cref{sec:cost}.

\subsubsection{Low-rank approximation approach}\label{sec:lowrank}
Due to the use of finite-dimensional observations, $\HT^i$ in~\cref{equ:Hi} has
a finite-dimensional range. Also, often $\HT^i$ exhibits rapid spectral decay and, in
cases where the dimension of measurements is high, the numerical rank of this
operator is typically much smaller than its exact rank. Note that the exact
rank is bounded by the measurement dimension.  
Such low-rank structures are due to possible smoothing properties of the
forward operator and that of $\Cprior$; see,
e.g.,~\cite{Bui-ThanhGhattasMartinEtAl13}.
The following technical result facilitates exploiting such low-rank structures
to compute the OED objective.

\begin{proposition}\label{prp:lowrank}
Let $\C$ and $\A$ be in $\Ltsymp(\hilbm)$. 
Letting $\{v_k\}_{k=1}^\infty$ be the orthonormal basis of eigenvectors of $\A$ with corresponding 
(real non-negative) eigenvalues $\{\lambda_k\}_{k=1}^\infty$, we have
\begin{equation}\label{equ:prp_lowrank}
\trace(\C^{1/2} (I + \A)^{-1}\C^{1/2}) = \trace(\C) - \sum_{k=1}^\infty \frac{\lambda_k}{1+\lambda_k}\|\C^{1/2} v_k\|_\hilbm^2.
\end{equation}
\end{proposition}
\begin{proof}
The result follows by noting that
\[
\begin{aligned}
\trace(\C) - \trace(\C^{1/2} (I + \A)^{-1}\C^{1/2}) 
       &= \sum_{k=1}^\infty \mip{\C v_k}{v_k} - \sum_{k=1}^\infty \mip{\C (I + \A)^{-1} v_k}{v_k}
\\
       &= \sum_{k=1}^\infty \mip{\C v_k}{v_k} - \sum_{k=1}^\infty \frac{1}{1 + \lambda_k} \mip{\C v_k}{v_k} 
\\
       &= \sum_{k=1}^\infty \frac{\lambda_k}{1+\lambda_k} \mip{\Cprior v_k}{v_k}
       = \sum_{k=1}^\infty \frac{\lambda_k}{1+\lambda_k} \|\C^{1/2} v_k\|_\hilbm^2.
\end{aligned}
\]
\end{proof}
The relation~\cref{equ:prp_lowrank} facilitates approximating 
$\trace(\C^{1/2}(I + \A)^{-1}\C^{1/2})$. Namely, we can truncate the 
infinite summation in the right-hand side to the first $r$ terms, corresponding 
to the $r$ dominant eigenvalues of $\A$. We can also quantify 
the approximation error due to this truncation as follows.
\begin{proposition}\label{prp:lowrank_err}
Let $\A$ and $\C$ be as in~\cref{prp:lowrank} and 
let $\{\lambda_k\}_{k=1}^{r+1}$ be the $r+1$ largest eigenvalues of $\A$. 
Define 
$T_r(\A, \C) =  
\trace(\C) - \sum_{k=1}^r \frac{\lambda_k}{1+\lambda_k}\|\C^{1/2} v_k\|_\hilbm^2$.
Then, 
\begin{equation}\label{equ:prp_lowrank_err}
     |\trace(C^{1/2}(I + \A)^{-1}\C^{1/2}) - T_r(\A, \C)| \leq 
     \frac{\lambda_{r+1}}{1+\lambda_{r+1}} \trace(\C).
\end{equation}
\end{proposition}
\begin{proof}
We have 
\[
\begin{aligned}
  |\trace(C^{1/2}(I + \A)^{-1}\C^{1/2}) - T_r(\A, \C)| 
  &= \sum_{k=r+1}^\infty  \frac{\lambda_k}{1+\lambda_k}\|\C^{1/2} v_k\|_\hilbm^2 \\
  &\leq \frac{\lambda_{r+1}}{1+\lambda_{r+1}} \sum_{k=r+1}^\infty \|\C^{1/2} v_k\|_\hilbm^2 
  \\
  &= \frac{\lambda_{r+1}}{1+\lambda_{r+1}} \sum_{k=1}^\infty \mip{\C v_k}{v_k}
  = \frac{\lambda_{r+1}}{1+\lambda_{r+1}} \trace(\C). \qedhere 
\end{aligned}  
\]
\end{proof}

Next, we consider $\HT^i(\vec w)$ and let
$\{(\lambda_{ik}, v_{ik})\}_{k=1}^r$ be its dominant eigenpairs. 
(The dependence of eigenvalues and eigenvectors on $\vec w$ is
suppressed, for notational convenience.)  Using~\Cref{prp:lowrank} with $\C =
\Cprior$ and $\A = \HT^i(\vec w)$, we obtain the approximation
\begin{equation}\label{equ:lowrank_update}
\begin{aligned}
\trace(\Cpost^\yhati) &= 
      \trace\big(\Cprior^{1/2} (I + \HT^i(\vec w))^{-1}  \Cprior^{1/2}\big) \\
      &= \trace\big(\Cprior(I + \HT^i(\vec w))^{-1}\big) \approx \trace(\Cprior) - 
      \sum_{k=1}^r \frac{\lambda_{ik}}{1+\lambda_{ik}}\| \Cprior^{1/2} v_{ik}\|_\hilbm^2.
\end{aligned}
\end{equation}
Observe that only the second term in the right-hand side depends on $\vec w$.
This term, which provides a measure of uncertainty reduction,
can be used to define the OED objective. Specifically, using~\cref{equ:design_criterion_mc}
along with~\eqref{equ:lowrank_update}, 
leads to following form of the OED objective:
\begin{subequations}
\label{equ:design_criterion_LR_def}
\begin{equation}
\label{equ:design_criterion_LR}
   \PhiNdLR(\vec w) = -\frac{1}{\Nd}\sum_{i=1}^\Nd
   \sum_{k=1}^r \frac{\lambda_{ik}}{1+\lambda_{ik}} \| \Cprior^{1/2} v_{ik}\|^2_\hilbm,
\end{equation}
where, for $i \in \{1, \ldots, \Nd\}$, 
\begin{alignat}{2}
   &\iparmap^\yhati= \argmin_\iparm \mathcal{J}_{\vec{w}}(\iparm; \obs^i),
   \label{equ:map_estimation_LR}
\\
&\HT^i(\vec w) v_{ik} = 
\lambda_{ik} v_{ik}
&&\quad k \in \{1, \ldots, r\},
\label{equ:eig_LR}\\
&\mip{v_{ik}}{v_{il}} = \delta_{kl}
&&\quad k, l \in \{1, \ldots, r\}.
\label{equ:eig_orthnorm}
\end{alignat}
\end{subequations}
Note that the numerical rank of $\HT^i$ will, in general, depend on $i$. In 
\cref{equ:design_criterion_LR_def}, we have used a common target rank $r$ for
simplicity. In the present setting, this target rank is bounded by the number
of active sensors for a given $\vec w$. Note also that~\Cref{prp:lowrank_err}
shows how to quantify the error due to the use of 
truncated spectral decompositions. Namely, in view of~\cref{equ:prp_lowrank_err},
the error due to the truncation is 
bounded by $\left(\frac{1}{\Nd}\sum_{i=1}^\Nd\frac{\lambda_{i,r+1}}{1+\lambda_{i,r+1}}\right)\trace(\Cprior)$.


As in the case of the approach outlined in \Cref{sec:trace_est}, the dominant
computational challenge in computing $\PhiNdLR(\vec w)$ is the solution of the
MAP estimation problems~\cref{equ:map_estimation_LR}. This will be tackled
using the same techniques. The eigenvalue problem~\cref{equ:eig_LR} can be
tackled via Lanczos or randomized approaches.  The target rank $r$, can be
selected as the number of active sensors. This is suitable in cases where we
have a small number of active sensors. See~\Cref{sec:cost} for further details
regarding the computational cost of evaluating $\PhiNdLR$.

We point out that if the dominant eigenvalues are simple, the
condition $\mip{v_{ik}}{v_{ik}} = 1$ for all $k \in \{1, \ldots, r\}$, implies 
the orthonormality condition \cref{equ:eig_orthnorm}. This follows from
the fact that the eigenvectors corresponding to distinct eigenvalues of $\HT^i$,
which belongs to $\Ltsymp(\hilbm)$, are orthogonal. The simplicity
assumption on the dominant eigenvalues is observed in many 
applications where the dominant eigenvalues decay rapidly.
Making this simplicity assumption, and letting $s_{ik} = \Cprior^{1/2} v_{ik}$,
we may also write the eigenvalue problem above as the following generalized eigenvalue problem 
\begin{equation}\label{equ:eig_gen}
\begin{aligned}
&\H^i(\vec w) s_{ik} = \lambda_{ik} \Cprior^{-1} s_{ik},
\\
&\cip{s_{ik}}{s_{ik}} = 1,
\end{aligned}
\end{equation}
where $i \in \{1, \ldots, \Nd\}$ and $k\in \{1, \ldots, r\}$.
This formulation is helpful when describing the eigenvalue problem in the weak form, 
as seen in~\Cref{sec:oed-objective}. In particular, the eigenvalue problem~\eqref{equ:eig_gen}
will be formulated in terms of the incremental state and adjoint equations and the adjoint
based expression for $\HT^i$ applies.


\subsection{Greedy optimization}\label{sec:greedy}
We follow a greedy approach for solving the OED problem.
That is, we select sensors one at a time: at each step, we pick the sensor
that results in the greatest decrease in the OED objective value; see 
\cref{alg:greedy}. As mentioned before, we use the 
approach in \Cref{sec:lowrank} for computing the OED objective.
That is, we use the OED objective $\PhiNdLR$, as defined 
in~\cref{equ:design_criterion_LR_def}.
\begin{algorithm}[!ht]
\begin{algorithmic}[1]
\REQUIRE The target number of sensors $K$ in \cref{equ:optimOED}. 
\ENSURE The optimal design vector $\vec w$.
\STATE $\vec{w} \gets  \vec{0}$  
\STATE $\mathcal{I}_\mathrm{candidate} \gets  \{1, \ldots, \Ns\}$
\STATE $\mathcal{I}_{\mathrm{active}} \gets \emptyset$ 
\FOR{$\ell = 1$ \TO $K$} 
\STATE Evaluate $\PhiNdLR(\vec{w} + \vec{e}_j)$, for all $j \in \mathcal{I}_\mathrm{candidate}$
\hfill
\COMMENT{\small$\vec{e}_j$ is the $j$th coordinate vector in $\R^\Ns$}
\STATE $\displaystyle i_\ell \gets \displaystyle  \argmin_{j \in \mathcal{I}_\mathrm{candidate}} 
        \PhiNdLR(\vec{w} + \vec{e}_j)$
\STATE $\mathcal{I}_\mathrm{active} \gets \mathcal{I}_\mathrm{active} \cup \{ i_\ell \}$
\STATE $\mathcal{I}_\mathrm{candidate} \gets \mathcal{I}_\mathrm{candidate} \setminus \{ i_\ell \}$ 
\STATE $\vec{w} \gets \vec{w} + \vec{e}_{i_\ell}$
\ENDFOR
\end{algorithmic}
\caption{Greedy approach for solving the OED problem~\cref{equ:optimOED} with 
$\Phi_\Nd = \PhiNdLR$.} 
\label{alg:greedy}
\end{algorithm}

Theoretical justifications behind the use of a greedy approach for sensor
placement, in various contexts, have been investigated in a number of
works~\cite{KrauseSinghGuestrin08,
ShulkindHoreshAvron18,JagalurMarzouk21}. The solution obtained using the greedy
algorithm is known to be suboptimal.  However, as observed in practice, 
the use of a greedy algorithm is a
practical approach and is effective in obtaining near
optimal sensor placements; see
also~\cite{Li19,WuChenGhattas23,AlexanderianPetraStadlerEtAl21}.  We
demonstrate the effectiveness of this approach, in our computational results in
\Cref{sec:numerics}.  

\subsection{Computational cost of sensor placement}\label{sec:cost}
The $\ell$th step of the greedy algorithm requires $\Ns - \ell - 1$ 
OED objective evaluations (cf.\ step 5 of \cref{alg:greedy}); these can be performed in parallel.
It is straightforward to note that 
placing $K$ sensors using the greedy approach requires a total of $C(K, \Ns) := K \Ns -
K(K-1)/2$ OED objective function evaluations.  
Therefore, the overall cost of computing an optimal design, in terms of the
number of PDE solves, using the proposed approach is $C(K, \Ns)$ times the
number of PDE solves required in each OED objective evaluation. Thus, to
provide a complete picture, in this section, we detail the cost of OED objective evaluation
using the two approaches discussed in \Cref{sec:oed_obj_compute}.
A key aspect of both of these approaches is that the cost of OED 
objective evaluation, in terms of the number of required PDE solves,
is independent of the dimension of the 
discretized inversion and secondary parameters.

In the following discussion, the rank of the operators $\HT^i(\vec w)$ in
\cref{equ:Hi} plays an important role. As mentioned before, the exact rank of
these operators is bounded by the number of active sensors. Moreover, if the
number of active sensors is high, the numerical rank is typically considerably
smaller than the exact rank. We denote the number of active sensors in a given
design by $\Nact = \Nact(\vec w)$.  Note that for a binary design vectors $\vec
w$, $\Nact(\vec w) = \| \vec w\|_1$.

\paragraph{Cost of evaluating $\PhiNdTR$}
The most expensive step in evaluating
\cref{equ:design_criterion_mc_trace_est_def} is solving the inner optimization
problems for the MAP points $\iparmap^\yhati$, $i \in \{1, \ldots, \Nd\}$.  In
the present work, the inner optimization problem is solved via inexact
Gauss--Newton-CG with line search.  The cost of each Gauss--Newton iteration is
dominated by the CG solves for the search direction.  When using the prior
covariance operator as a preconditioner, the number of CG iterations is bounded
by $\mathcal{O}(\Nact)$; see e.g.,~\cite{IsaacPetraStadlerEtAl15}.  Each CG
step in turn requires two linearized PDE solves (incremental forward/adjoint
solves).  Hence, the cost of the $\Nd$ MAP estimation problems, in terms of
``forward-like'' PDE solves, is $\mathcal{O}(2 \times \Nd \times
n_\text{newton} \times \Nact)$, where $n_\text{newton}$ is the (average) number
of Gauss--Newton iterations.  The $2\times \Nd \times \Ntr$ solves in
\cref{equ:hess_sys_trace_est} are also done using preconditioned CG. As noted
before, each of these solves requires $\mathcal{O}(\Nact)$ CG iterations. To
summarize, the overall cost of evaluating
\cref{equ:design_criterion_mc_trace_est_def} is bounded by
\begin{equation}\label{equ:TR_cost}
\underbrace{\mathcal{O}(2 \times \Nd \times n_\text{newton} \times \Nact)}_{\text{cost of MAP point solves 
   in \cref{equ:map_estimation_trace_est}}}
+ 
\underbrace{\mathcal{O}(2\times\Nd \times \Ntr \times \Nact)}_{\text{cost of solves 
   in~\cref{equ:hess_sys_trace_est}}}. 
\end{equation}
It is important to note that MAP estimation problems as well as the linear solves in 
\cref{equ:hess_sys_trace_est} can be performed in parallel.

\paragraph{Cost of evaluating $\PhiNdLR$}
As is the case with
\cref{equ:design_criterion_mc_trace_est_def}, the solution of the MAP estimation 
problems~\cref{equ:map_estimation_LR} dominates the computational cost of evaluating $\PhiNdLR$.
This cost was analyzed for the case of $\PhiNdTR$. We next discuss the cost of solving the
eigenvalue problems in~\cref{equ:eig_LR}. We rely on the Lanczos method~\cite{GolubVanLoan13} for these eigenvalue 
problems.\footnote[5]{Another option is the use of randomized methods~\cite{HalkoMartinssonTropp11}.}
For each $i \in \{1, \ldots, \Nd\}$, the Lanczos method requires $\mathcal{O}(\Nact)$ 
applications of $\HT^i$ on vectors, each costing two PDE solves. Therefore, solving the
eigenvalue problems requires a total of $\mathcal{O}(2 \times \Nd \times \Nact)$ PDE solves.
Hence, the overall cost of computing $\PhiNdLR$ is bounded by
\begin{equation}\label{equ:LR_cost}
\underbrace{\mathcal{O}(2 \times \Nd \times n_\text{newton} \times \Nact)}_{\text{cost of MAP point solves
   in \cref{equ:map_estimation_LR}}}
+
\underbrace{\mathcal{O}(2\times\Nd \times \Nact)}_{\text{cost of solves
   in~\cref{equ:eig_LR}}}.
\end{equation}

To sum up, the computational cost of evaluating both $\PhiNdTR$ and $\PhiNdLR$ is dominated
by the cost of solving the MAP estimation problems.  However, by exploiting the low-rank structure of
the operators $\HT^i$, $\PhiNdLR$ is more efficient to compute, as it does not
require $\Ntr$ Hessian solves (compare also the second terms in 
~\cref{equ:TR_cost} and~\cref{equ:LR_cost}). Moreover, computing the traces using the
eigenvalues will be, in general, more accurate than a sampling based approach,
as long as sufficiently many eigenvalues are used.  Note also that when using a
greedy approach, $\Nact$ starts at $\Nact = 1$ in the first step and increase
by one in each iteration. 

However, the idea of using a randomized trace estimator does have some merits.  For
one thing, typically a small (in order of tens) $\Ntr$ is sufficient when
solving the OED problem.  Moreover, $\PhiNdTR$ is simpler to implement as it
does not require solving eigenvalue problems.  Note, however, that if a large
$\Ntr$ is needed, then the cost of Hessian solves in
\cref{equ:hess_sys_trace_est} might exceed the cost of the MAP estimation
problems.

\section{Model problem}
\label{sec:model}
Here, we present the model problem used to study our approach for OED under
uncertainty. We consider a nonlinear inverse problem governed by a linear
elliptic PDE, in a three-dimensional domain. This
problem, which is adapted from~\cite{NicholsonPetraKaipio18}, is motivated by
heat transfer applications. 
In \Cref{sec:inverse}, we detail the description of the Bayesian inverse
problem under study.  Subsequently, we detail the description of the OED objective
$\PhiNdLR$, for this specific model problem in~\Cref{sec:oed-objective}.


\begin{figure}[ht!]
\centering
\includegraphics[width=0.45\textwidth]{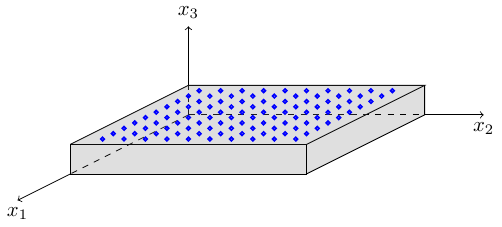}
\caption{Sketch of the physical domain $\Omega$ and location of candidate sensor
locations (blue circles).}
\label{fig:dom}
\end{figure}

\subsection{The Bayesian inverse problem}
\label{sec:inverse}

We consider the following model 
\begin{equation}\label{equ:poi}
  \begin{split}
    -\grad \cdot (\Exp{\iparb} \grad u)                   &= 0 \quad \text{ in } \Omega,\\
    u                                                     &= 0 \quad \text{ on } \GD, \\
    \Exp{\iparb} \grad{u} \cdot \vec{n}                   &= h \quad \text{ on } \GN, \\
    \Exp{\iparb} \grad{u} \cdot \vec{n} + \Exp{\iparm} u  &= 0 \quad \text{ on } \GR.
  \end{split}
\end{equation}
In this problem, 
$\Omega \subset \R^3$ is bounded domain with sufficiently smooth boundary
$\Gamma = \GD \cup \GN \cup \GR$, where $\GD$, $\GN$, and $\GR$ are mutually
disjoint.  In \cref{equ:poi}, $u$ is the state variable, $\iparm$ is the
inversion parameter, and $\iparb$ is the secondary uncertain parameter. The Neumann
boundary data is $h\in L^2(\GN)$, $\vec{n}$ is the unit-length outward normal
for the boundaries $\GN$ and $\GR$, and for simplicity we have considered zero
volume source, and homogeneous Dirichlet and Robin conditions.  In our
numerical experiments, $\Omega$ is a three-dimensional domain with a unit
square base and height such that aspect ratio (base/height) is 100;
see~\Cref{fig:dom}. In the present example, $\GR$ is the bottom edge of the domain, $\GN$
is the top edge, and $\GD$ is the union of the side edges.
In our computations, we define the boundary source term $h$ according to $h(\vec{x}) =
1 + \sin\big(4\pi \sqrt{(x_1 - 1)^2 + (x_2 - 1)^2}\big)$.

Note that in this problem, the inversion and secondary parameters are both
functions.  The inversion parameter belongs to the Hilbert space $\hilbm =
L^2(\GR)$, which is equipped with the $L^2(\GR)$-inner product.  The secondary
parameter $\iparb$ takes values in $L^2(\Omega)$. As discussed below, we define
$\xi$ as an $L^2(\Omega)$-valued Gaussian random variable with almost surely
continuous realizations, which  ensures (almost sure) well-posedness of the
problem~\cref{equ:poi}.  Below we use $\ipp{\Omega}{\cdot}{\cdot}$ and
$\ipp{\GN}{\cdot}{\cdot}$ to denote the $L^2(\Omega)$- and $L^2(\GN)$-inner
products, respectively.  Also, with a slight abuse of notation, we denote the
$L^2(\Omega)^3$ inner-product with $\ipp{\Omega}{\cdot}{\cdot}$ as well.



We use measurements of $u$ on the top boundary, $\GN$, to estimate the
inversion parameter $\iparm$. The measurements are collected at a set of sensor
locations as depicted in~\Cref{fig:dom}.  Hence, in the present problem, the
parameter-to-observable is defined as the composition of a linear observation
operator $\B$, which extracts solution values at the sensor locations, and the
PDE solution operator. Note that this parameter-to-observable map is a
nonlinear function of the inversion and secondary
parameters.

As detailed in~\Cref{sec:BAE}, we assume a Gaussian noise model,
and use the BAE approach to account for the secondary uncertainties in the
inverse problem.  Also, we use a Gaussian prior law for $\iparm$
and model the secondary uncertainty as a Gaussian random variable.\footnote[7]{As
discussed earlier, the presented framework does not rely on a specific choice
of distribution law for $\iparb$. The Gaussian assumption here is merely for
computational convenience.} For both $\iparm$ and $\iparb$, we use covariance
operators that are defined as negative powers of Laplacian-like
operators~\cite{Stuart10}; see \Cref{sec:numerics}.  
Note that with the appropriate choice of the covariance operator,
the realizations of $\iparb$ will be almost surely continuous on the closure of
$\Omega$.


\subsection{OED under uncertainty problem formulation}\label{sec:oed-objective}
In this section we discuss the precise definition of $\PhiNdLR$, for the model
inverse problem discussed in \Cref{sec:inverse}.  The MAP estimation problems
in~\cref{equ:map_estimation_LR} require minimizing
$\mathcal{J}_{\vec{w}}(\iparm;\obs^i)$ defined in~\cref{equ:costw}. 
For notational convenience, in this section, we use the shorthand $\mi$ to denote 
$\iparmap^\yhati$.
In the
present example, the first order optimality conditions for this optimization
problem are given by
\begin{align}
  \ipp{\Omega}{\Exp{\iparb} \grad{u_i}}{\grad{\utest}} - \ip{h}{\utest}_{\GN} +
  \mip{\Exp{\mi} u_i}{\utest} &= 0, &\quad \forall \utest \in \V,
  \label{equ:state} \\
  \ipp{\Omega}{\Exp{\iparb} \grad{p_i}}{\grad{\ptest}} + \mip{\Exp{\mi}
    \ptest}{p_i} + \ipp{\Omega}{\B^* \WW(\vec w) (\B u_i - \obs^i +
    \vec{\eps}_0)}{\ptest} &= 0, &\quad \forall \ptest \in \V,
  \label{equ:adj}\\
  \cip{\mi - \iparprm}{\mtest} + \mip{\mtest \Exp{\mi}
    u_i}{p_i} &= 0, &\quad \forall \mtest \in \CM,
  \label{equ:grad}
\end{align}
where $\V =  \{ v \in H^1(\Omega) : \restr{v}{\GD} = 0\}$.
For the derivation details,
we refer to~\cite{NicholsonPetraKaipio18}.  Note
that~\cref{equ:state} is the weak form of the state
equation~\cref{equ:poi}, and \cref{equ:adj} and \cref{equ:grad} are the 
weak forms of the 
adjoint and gradient equations, respectively.
Note also that the left hand side of \cref{equ:grad}
describes the action of the derivative of
$\mathcal{J}_{\vec{w}}(\iparm;\obs^i)$ in the direction $\mtest$.

To specify the eigenvalue problem in~\cref{equ:eig_LR}, we first
discuss the action of the operator $\H^i$ (evaluated at $\mi$) in
the direction $\mhat$. In weak form, this Hessian application satisfies, 
\begin{align}\label{equ:GNhessmisfitapply}
   \mip{\H^i(\mi)\mhat}{\mtest} = \mip{\mtest \Exp{\mi} u_i}{\hat{p}_i}, 
\end{align}  
for every $\mtest \in \CM$.
In~\cref{equ:GNhessmisfitapply}, 
for a given $\mi$, $ u_i$ solves the state
problem~\eqref{equ:poi}, $\hat{p}_i$ solves the 
incremental adjoint problem
\begin{align}
  \ipp{\Omega}{\Exp{\iparb} \grad{\hat{p}_{i}}}{\grad{\ptest}} +
  \mip{\Exp{\mi} \ptest}{\hat{p}_{i}} + \ipp{\Omega}{\B^* \WW(\vec w)
    \B \hat{u}_{i}}{\ptest} = 0, \quad \forall \ptest \in \V,
\end{align}
and $\hat{u}_{i}$ solves the so-called incremental state problem
\begin{align}
  \ipp{\Omega}{\Exp{\iparb} \grad{\hat{u}_{i}}}{\grad{\utest}} +
  \mip{\Exp{\mi} \hat{u}_{i}}{\utest} + \ip{\Exp{\mi}
    \mhat u_i}{\utest}_{\GR}= 0, \quad \forall \utest \in \V.
\end{align}

We next summarize the OED problem of minimizing~\eqref{equ:design_criterion_LR_def}
as a PDE-constrained optimization problem specialized for the model inverse problem
in \Cref{sec:model}:
\begin{subequations}
  \label{equ:oeduu-ex}
  \begin{align}
    &\min_{\vec{w} \in \{0, 1\}^\Nd} -\frac{1}{\Nd}\sum_{i=1}^\Nd
    \sum_{k=1}^r \frac{\lambda_{ik}}{1+\lambda_{ik}} \| s_{ik}\|_\hilbm^2,
  \end{align}
  where, for $i \in \{1, \ldots, \Nd\}$ and $k \in \{1, \ldots, r\}$,
    \begin{align}
      \ipp{\Omega}{\Exp{\iparb} \grad{u_i}}{\grad{\utest}} - \ipp{\GN}{h}{\utest} +
      \mip{\Exp{\mi} u_i}{\utest} &= 0, &\forall \utest \in \V,
      \label{equ:oeduu-ex-state}\\
      \ipp{\Omega}{\Exp{\iparb} \grad{p_i}}{\grad{\ptest}} + \mip{\Exp{\mi}
        \ptest}{p_i} + \ipp{\Omega}{\B^* \WW(\vec w) (\B u_i - \obs^i +
        \vec{\eps}_0)}{\ptest} &= 0, &\forall \ptest \in \V,
      \label{equ:oeduu-ex-adj}\\
      \cip{\mi - \iparprm}{\mtest} + \mip{\mtest \Exp{\mi} u_i}{p_i} &= 0, 
      &\forall \mtest \in \CM, 
      \label{equ:oeduu-ex-grad}\\
      \ipp{\Omega}{\Exp{\iparb} \grad{\hat{p}_{ik}}}{\grad{\ptest}} + 
        \mip{\Exp{\mi} \ptest}{\hat{p}_{ik}} + 
        \ipp{\Omega}{\B^* \WW(\vec w) \B \hat{u}_{ik}}{\ptest} &= 0, &\forall \ptest \in \V,
      \label{equ:oeduu-ex-incadj}\\
      \mip{\mtest \Exp{\mi} u_i}{\hat{p}_{ik}} &= 
         \lambda_{ik} \cip{s_{ik}}{\mtest}, &\forall \mtest \in \CM, 
      \label{equ:oeduu-ex-hessapp}\\
      \ipp{\Omega}{\Exp{\iparb} \grad{\hat{u}_{ik}}}{\grad{\utest}} + 
         \mip{\Exp{\mi} \hat{u}_{ik}}{\utest} + \mip{\Exp{\mi} s_{ik} u_i}{\utest} &= 0, 
         &\forall \utest \in \V,
      \label{equ:oeduu-ex-incstate}\\
      \cip{s_{ik}}{s_{ik}} &= 1.
    \end{align}
\end{subequations}
The PDE
constraints~\eqref{equ:oeduu-ex-state}--\eqref{equ:oeduu-ex-grad} are
the optimality system~\eqref{equ:state}--\eqref{equ:grad}
characterizing the MAP point $\mi = \iparmap^\yhati$ described
in~\eqref{equ:map_estimation_LR}. The equations
\eqref{equ:oeduu-ex-incadj}--\eqref{equ:oeduu-ex-incstate} are the PDE
constraints that describe the Hessian apply and eigenvalue problem.
Note that we have reformulated the eigenvalue problem according to~\cref{equ:eig_gen}.

\section{Computational results}\label{sec:numerics}
In this section, we numerically study our OED under uncertainty approach, which
we apply to the model inverse problem described in~\Cref{sec:model}.  We begin
by specifying the Bayesian problem setup and discretization details
in~\Cref{sec:setup}. Then, we study the impact of secondary model uncertainty
on the measurements and compute the BAE error statistics
in~\Cref{sec:BAE_numerics}.  Finally, in~\Cref{sec:OEDUU}, we examine the
effectiveness of our approach in computing uncertainty aware designs.  We also
study the impact of ignoring model uncertainty in (i) experimental design stage
and (ii) both experimental design and inference stages.  
Ignoring uncertainty in the design stage amounts to fixing the secondary
parameter $\iparb$ at its nominal value (i.e., using the approximate
model~\cref{equ:model_approx}) and ignoring the approximation error when
solving the OED problem. We refer to designs computed in this manner as
\emph{uncertainty unaware} designs. Note that in the case of (i), the uncertainty
is still accounted for when solving the inverse problem. This study
illustrates the importance of accounting for model uncertainty, when computing
experimental designs.  On the other hand, the study of case (ii) 
illustrates the
pitfalls of ignoring uncertainty in both the optimal
design problem and subsequent solution of the inverse problem.

\subsection{Problem setup}\label{sec:setup}
We consider $\Ns = 100$ candidate sensor locations that are arranged in a
regular grid on the top boundary $\GN$; see~\Cref{fig:dom}.  The additive noise
in the synthetic measurements has a covariance matrix of the form
$\ncov=\sigma^2\mat{I}$, with $\sigma = 10^{-3}$. This amounts to about one
percent noise.  

We use a Gaussian prior law $\prior =
\GM{\iparprm}{\Cprior}$ for $\iparm$. The prior mean is taken as a constant
function $\iparprm \equiv 1$, and we use a covariance operator given by the
inverse of a Laplacian-like operator.  Specifically, we let
$\Cprior:=\mathcal{A}^{-2}$, with
$\mathcal{A}[\iparm]=-\nabla\cdot(\theta\nabla\iparm)+\alpha\iparm$, where we
take $\theta=0.1$ and $\alpha=1$. To help mitigate undesirable boundary affects
that can arise due to the use of PDE-based prior covariance operators, we equip
the operator $\mathcal{A}$ with Robin boundary conditions~\cite{DaonStadler18}.
For illustration, four random draws from the prior distribution are shown in
\Cref{fig:prior_m}.
\begin{figure}[ht!]
\centering
\includegraphics[height=.22\textwidth]{./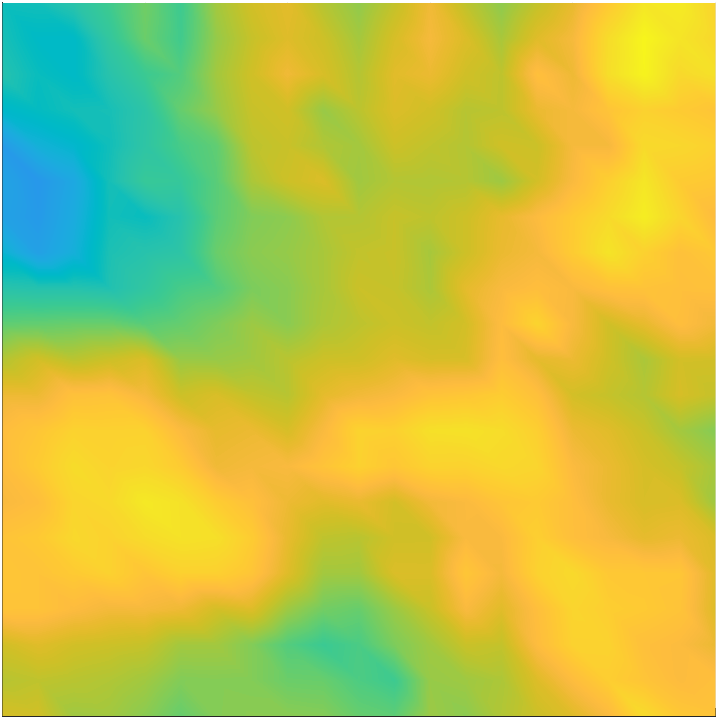}
\hfill
\includegraphics[height=.22\textwidth]{./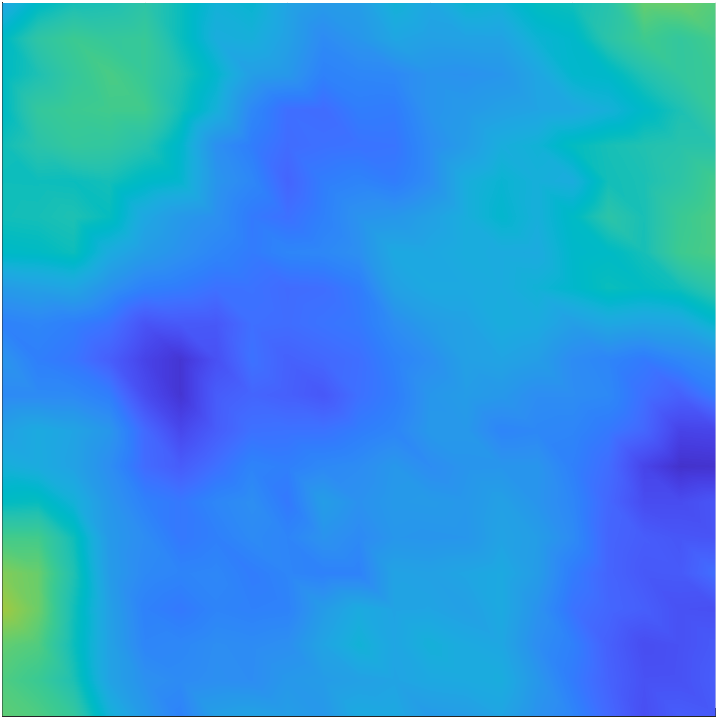}
\hfill
\includegraphics[height=.22\textwidth]{./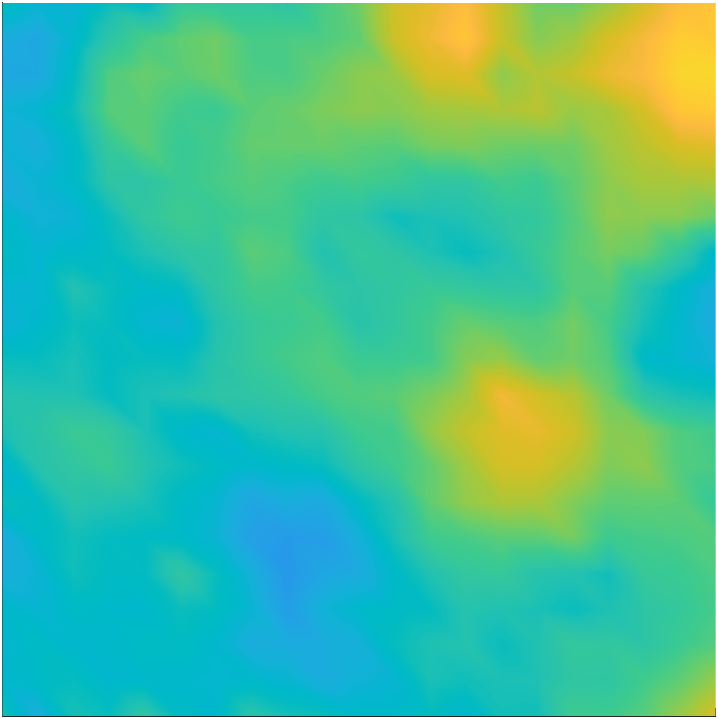}
\hfill
\includegraphics[height=.22\textwidth]{./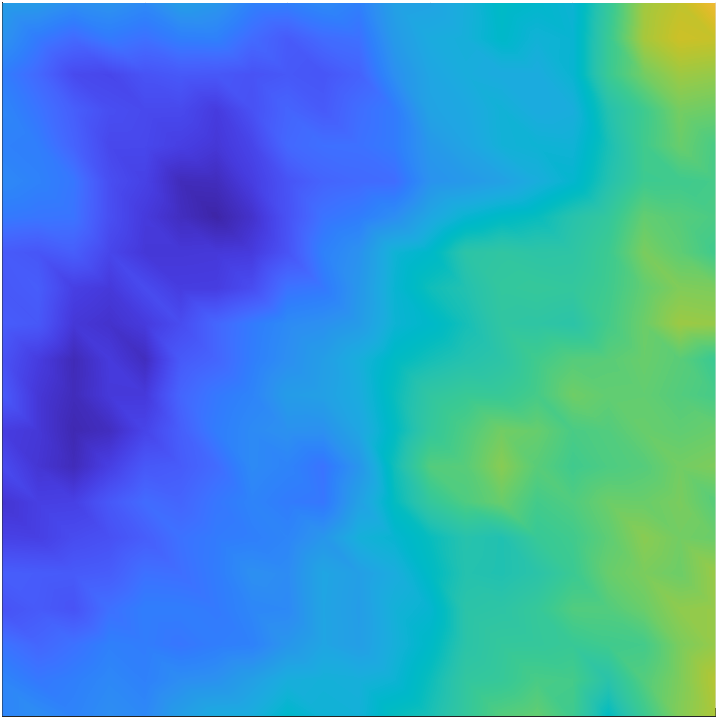}
\hfill
\includegraphics[height=.22\textwidth]{./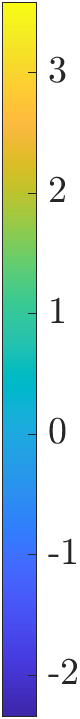}
\caption{Samples of the primary uncertain parameter $m$.}
\label{fig:prior_m}
\end{figure}
The law of the secondary parameter $\iparb$ is chosen to be a Gaussian
measure $\mu_\iparb = \GM{\bar\iparb}{\Caux}$.  
We set the mean of $\iparb$ as the constant function
$\bar\iparb\equiv0$, and the covariance operator is defined as
$\Caux:=\mathcal{L}^{-2}$, where
$\mathcal{L}[\iparb]=-\nabla\cdot(\mat{\Theta}\nabla\iparb)+\gamma\iparb$
with $\mat{\Theta}=0.25\diag(1,1,\tfrac{1}{100})$ and $\gamma=50$.
This choice of $\mat{\Theta}$ corresponds to a random field with much
shorter correlation in the $z-$direction than in the $x$- and
$y$-directions, inline with aspect ratio of 100 used in defining the
domain $\Omega$. We show four representative
samples of $\iparb$ in \Cref{fig:prior_xi}.

\begin{figure}[ht!]
\centering
\includegraphics[width=.24\textwidth]{./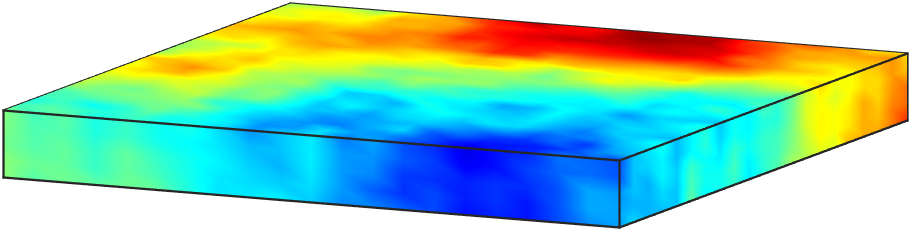}
\hfill
\includegraphics[width=.24\textwidth]{./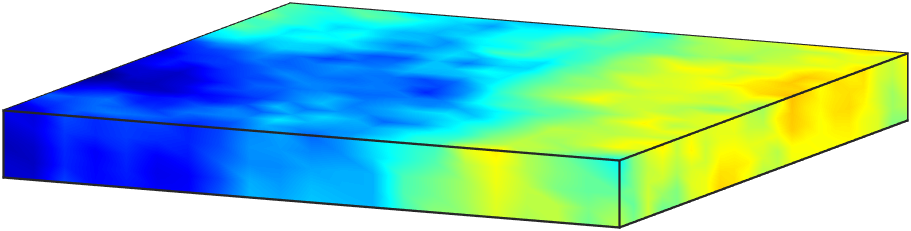}
\hfill
\includegraphics[width=.24\textwidth]{./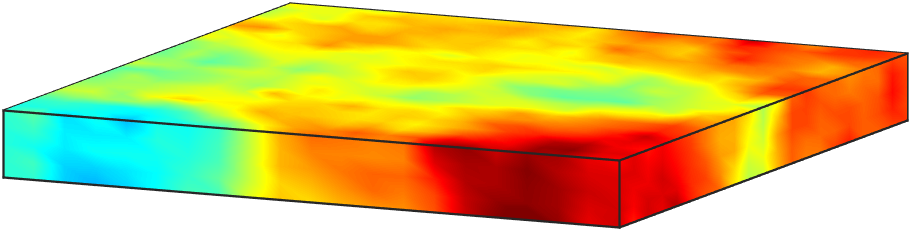}
\hfill
\includegraphics[width=.24\textwidth]{./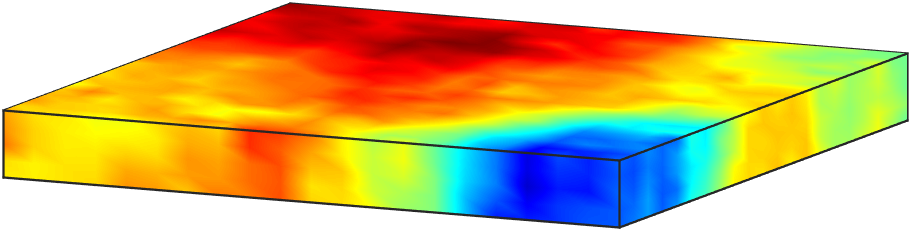}
\\
\vspace{5mm}
\includegraphics[width=.45\textwidth]{./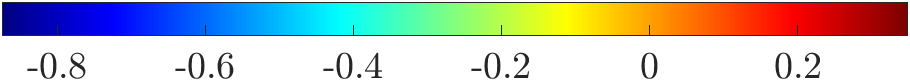}
\caption{Samples of the secondary uncertain parameter $\iparb$.} 
\label{fig:prior_xi}
\end{figure}

The forward problem \cref{equ:poi} is solved using a continuous Galerkin finite
element method.  We use $9{,}600$ tetrahedral elements and $2{,}205$ piecewise
linear basis functions. As such, the discretized state and adjoint variables,
as well as the secondary parameter $\iparb$, have dimension $2{,}205$. On
the other hand, the discretized inversion parameter $m$, which is defined on
the bottom boundary, is of dimension $441$. 

\subsection{Incorporating the model uncertainty in the inverse problem}\label{sec:BAE_numerics}
We begin by studying the impact of the secondary parameter on the
solution of the forward problem. This is illustrated 
in~\Cref{fig:xi_impact}.  In this experiment, we solve the forward problem for
a fixed $\iparm$ and four different realizations of $\iparb$. Note
that, although the qualitative behavior of $u$ on $\GN$ is similar for the
different samples of $\iparb$, there are considerable differences in the values.
This indicates that the approximation error due to fixing $\xi$ at the nominal
value will have significant variations.
\begin{figure}[t!]
\centering
\includegraphics[height=.22\textwidth]{./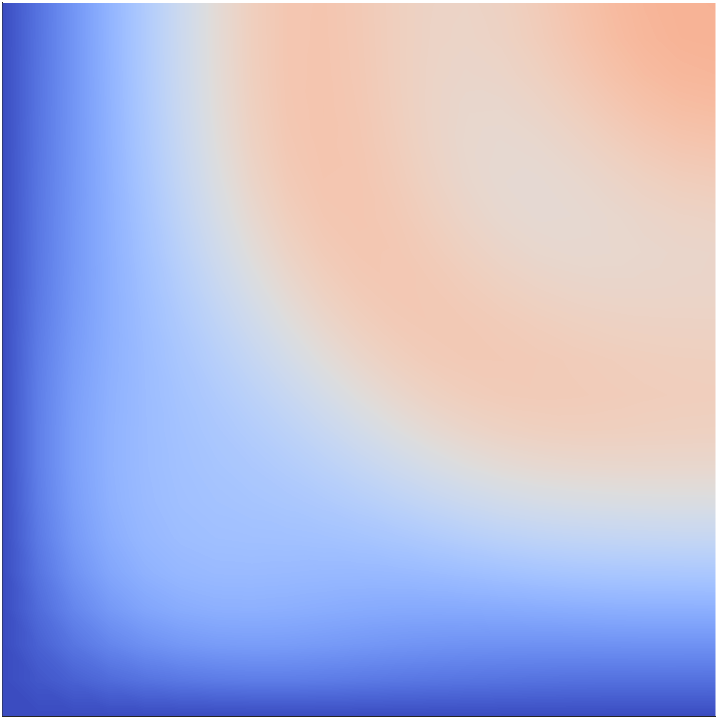}
\hfill
\includegraphics[height=.22\textwidth]{./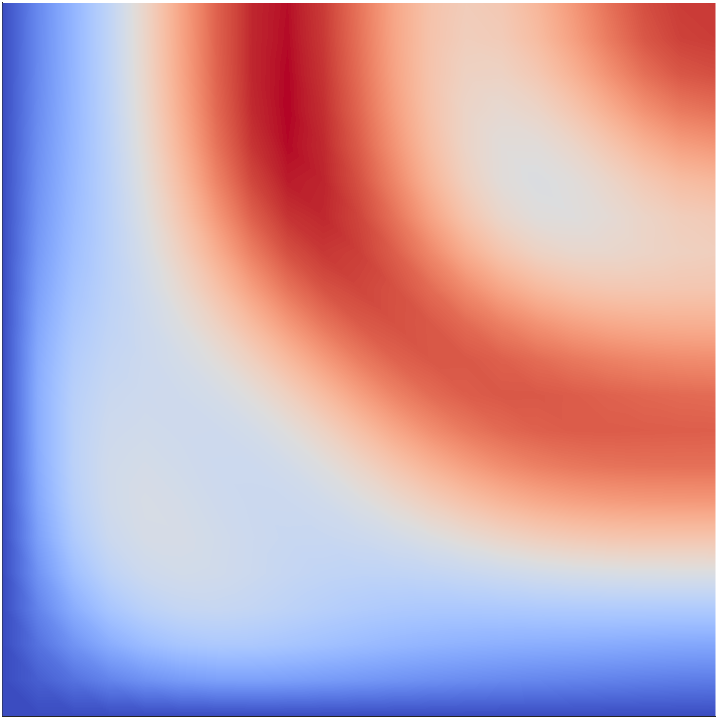}
\hfill
\includegraphics[height=.22\textwidth]{./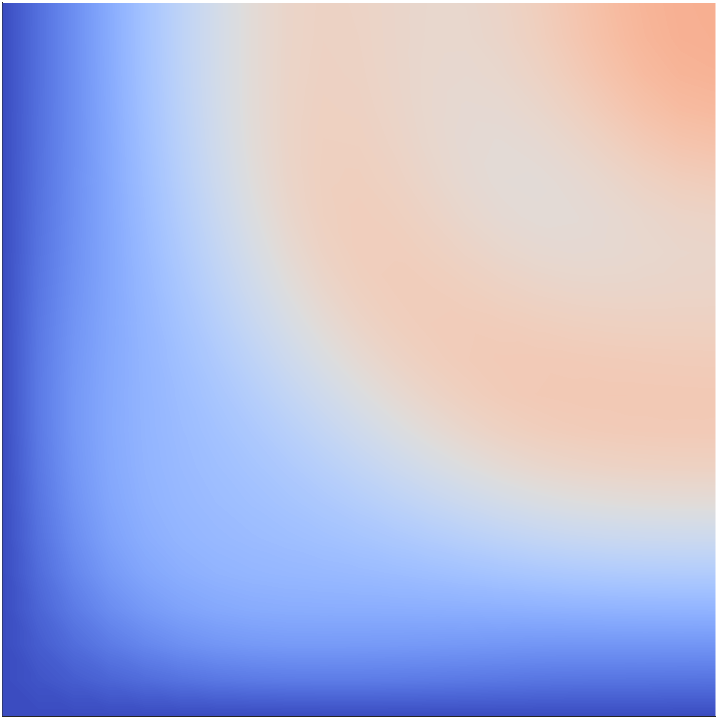}
\hfill
\includegraphics[height=.22\textwidth]{./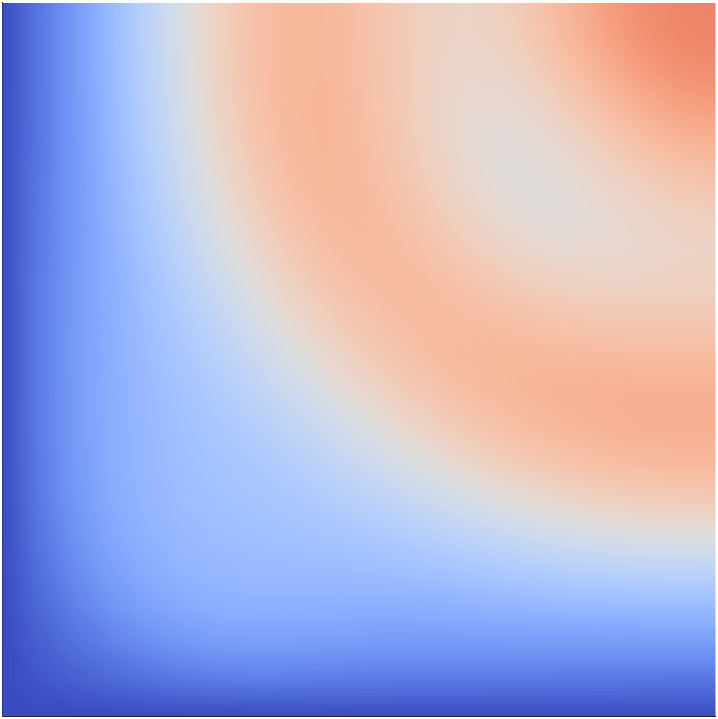}
\hfill
\includegraphics[height=.22\textwidth]{./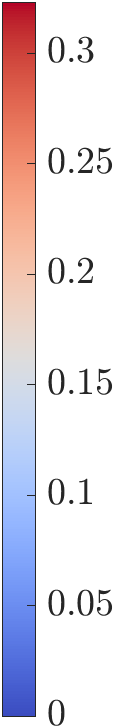}
\caption{The effect of the secondary uncertain parameter $\xi$ on the
  state $u$ on top surface for the (fixed) true primary parameter $m$
  and four different realizations of $\xi$.}
\label{fig:xi_impact}
\end{figure}

We compute the sample mean and covariance matrix of the approximation error as
in~\cref{equ:BAE_comp} with $\Nsamp = 1{,}000$.  The mean and marginal standard
deviations are shown in \Cref{fig:approxerrs}. To illustrate the correlation
structure of the approximation error, we show the correlation of
the approximation error at two of the candidate locations with 
the other candidate locations in
\Cref{fig:corr}~(left-middle).  To provide an overall picture, we report the
correlation matrix of the approximation error
in~\Cref{fig:corr}~(right). Note that the approximation errors at the
sensor sites are not only larger than the measurement noise, but also they are
highly correlated and have a nonzero and non-constant mean.  Thus, in
the present application, the approximation error due to model uncertainty
cannot be ignored and needs to be accounted for.

\begin{figure}[ht!]
\centering
\includegraphics[height=.25\textwidth]{./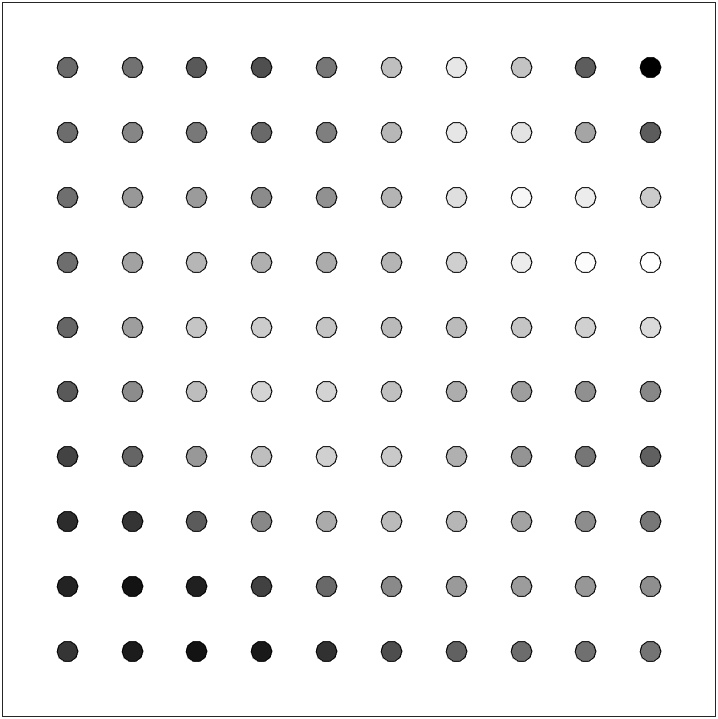}
\includegraphics[height=.25\textwidth]{./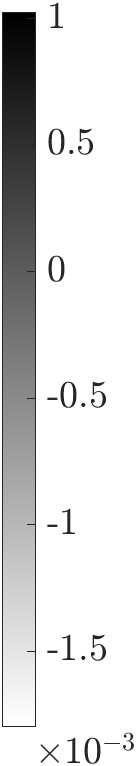}
\hspace{5mm}
\includegraphics[height=.25\textwidth]{./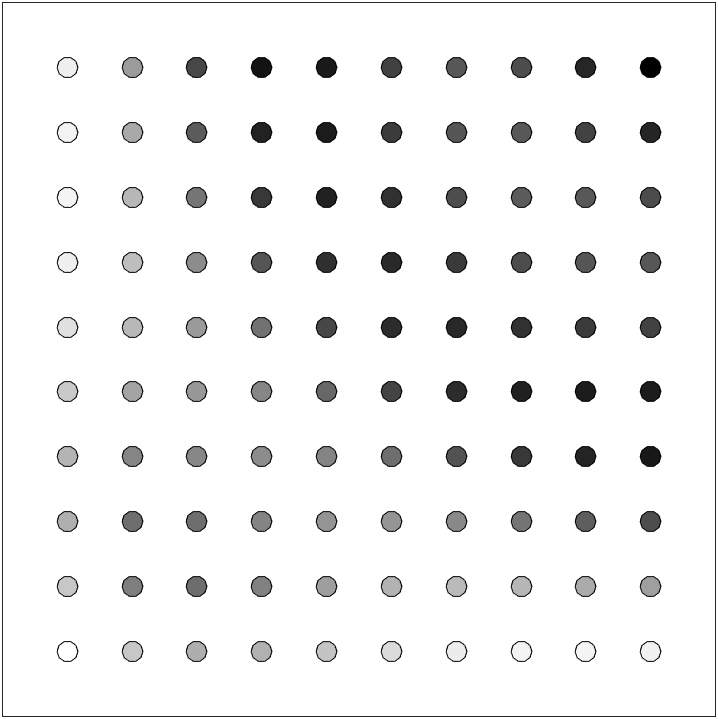}
\includegraphics[height=.25\textwidth]{./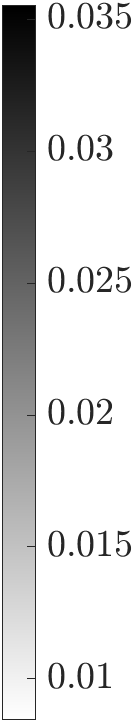}
\caption{ Left: the mean of the approximation error at the candidate
  sensor locations; right: the marginal standard deviation of the
  approximation error at these locations (i.e., the square root of
  diagonal entries of $\ecovhat$ given in~\cref{equ:BAE_comp}).}
\label{fig:approxerrs}
\end{figure}

\def \pos {0.5\columnwidth}
\addtolength\abovecaptionskip{-8pt}
\begin{figure}[ht]\centering
  \begin{tikzpicture}
    \node (1) at (0*\pos-0.53*\pos,    0*\pos){\includegraphics[height=.28\textwidth,trim=120 40 120 70,clip]{./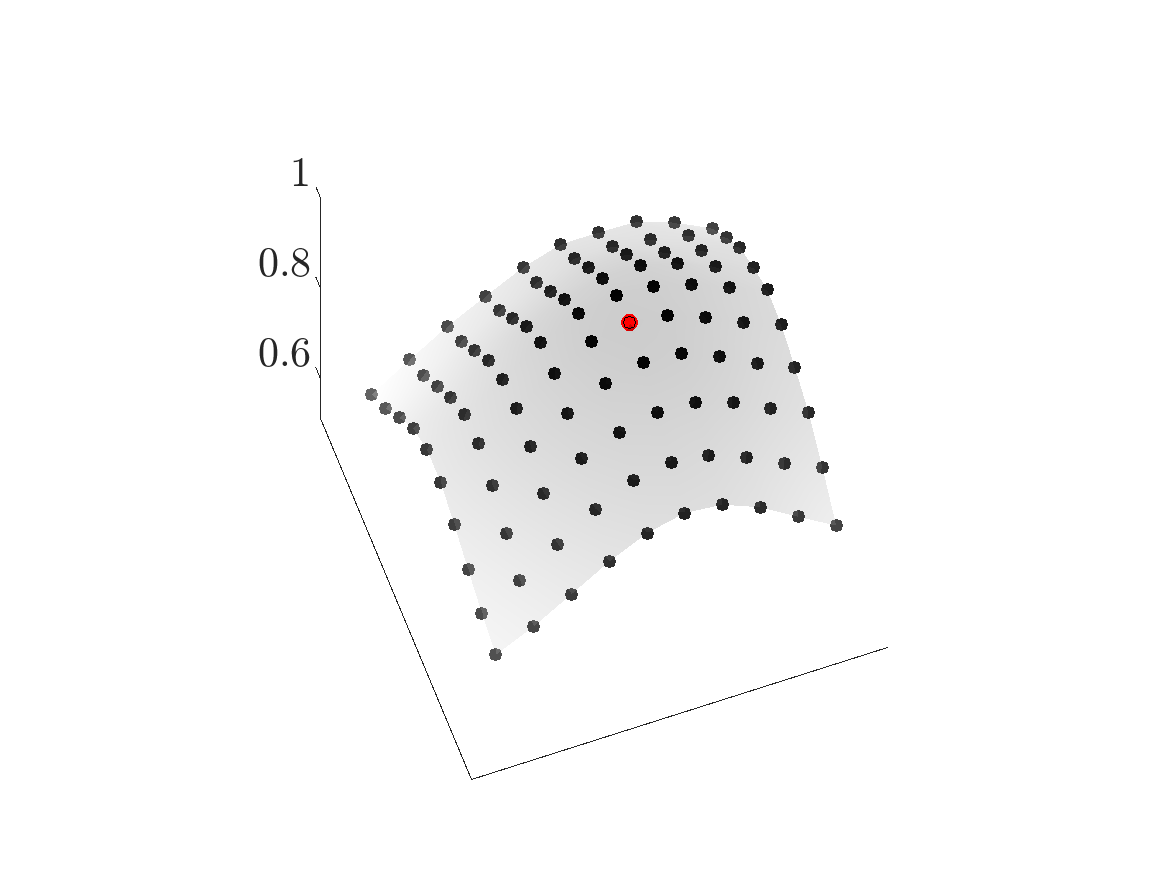}};
    \node (2) at (0*\pos, 0*\pos){\includegraphics[height=.28\textwidth,trim=120 40 120 70,clip]{./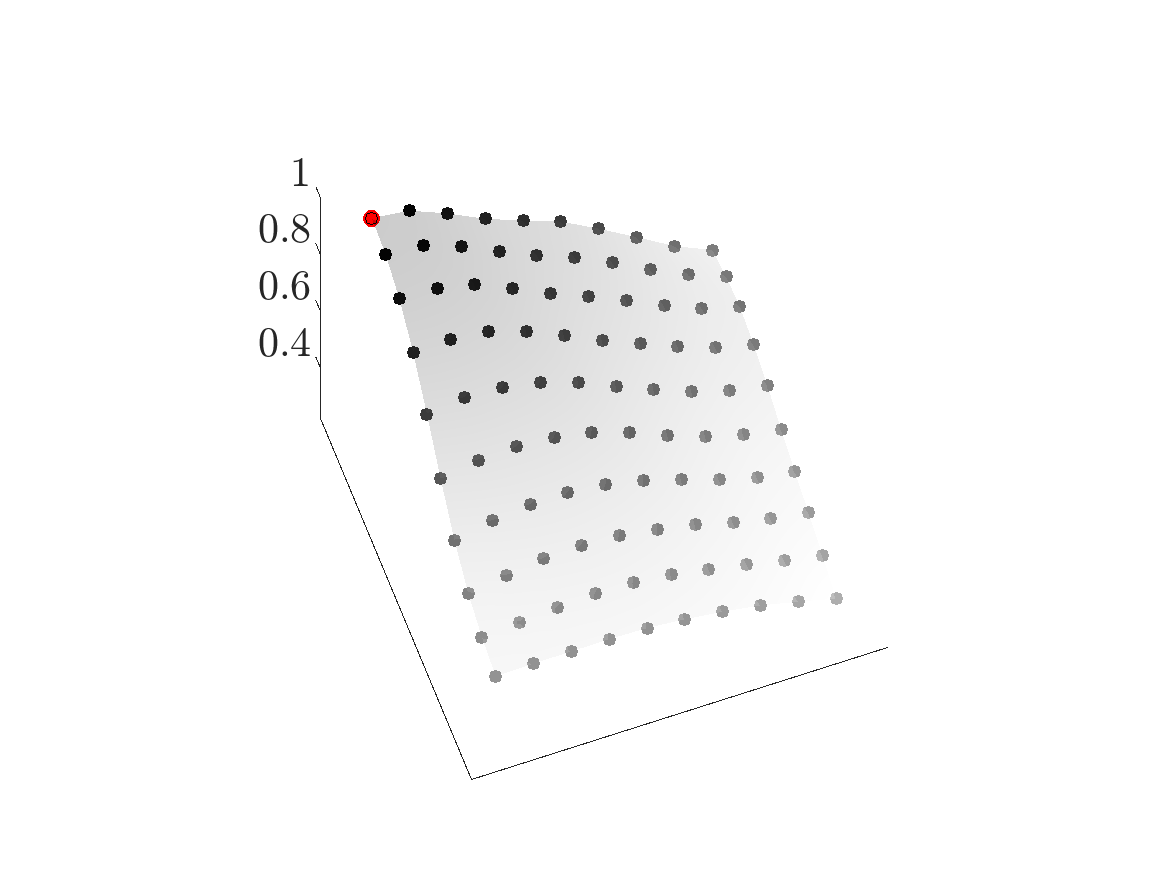}};
    \node (3) at (0*\pos+0.6*\pos, 0*\pos){\includegraphics[height=.28\textwidth]{./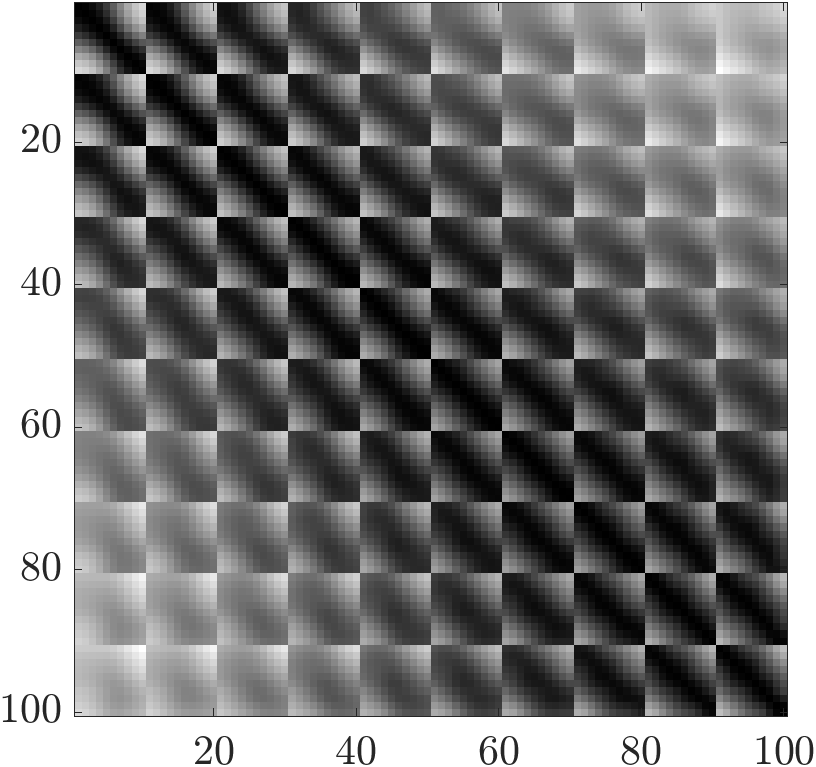}};
    \node (4) at (0*\pos+0.95*\pos,  0.0241*\pos){\includegraphics[height=.268\textwidth]{./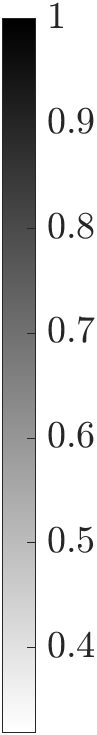}};
  \end{tikzpicture}
\caption{Left: The correlation of a measurement from a sensor near the
  middle marked by red dot, with the remaining sensors; middle: The
  same information for the sensor in the top left corner of the sensor
  grid; right: the correlation matrix of the approximation error.}
\label{fig:corr}
\end{figure}
\addtolength\abovecaptionskip{8pt}


\subsection{Optimal experimental design under uncertainty}\label{sec:OEDUU}
We begin by solving the OED problem \cref{equ:oeduu-ex} with $\Nd=5$ training data samples.
In~\Cref{fig:OED}~(left), we show an uncertainty aware optimal sensor placement with 20
sensors.  Note that due to the use of a greedy algorithm, we can track the
order in which the sensors are picked. To understand the impact of 
ignoring model uncertainty in the design stage, we also compute an uncertainty unaware
design; this is reported in~\Cref{fig:OED}~(right). 
\begin{figure}[ht!]
\centering
\includegraphics[height=.3\textwidth]{./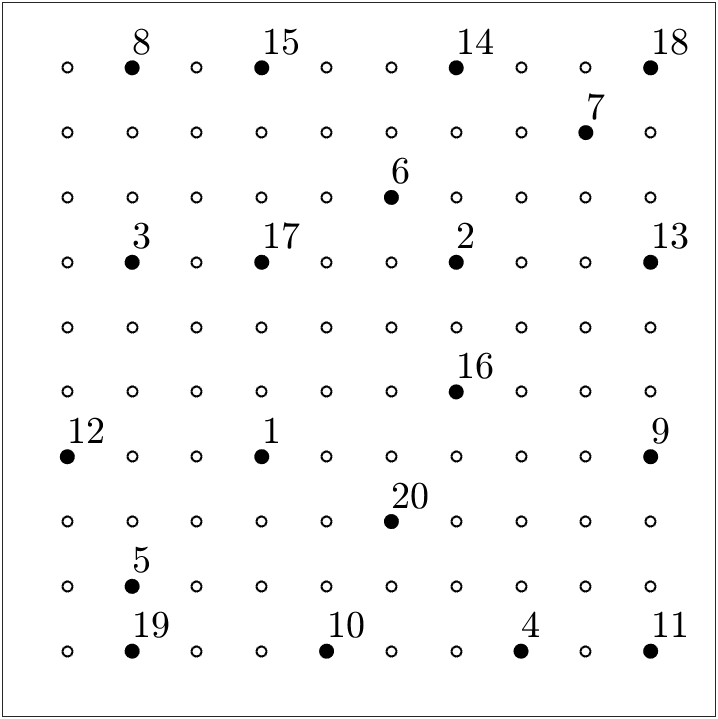}
\hspace{5mm}
\includegraphics[height=.3\textwidth]{./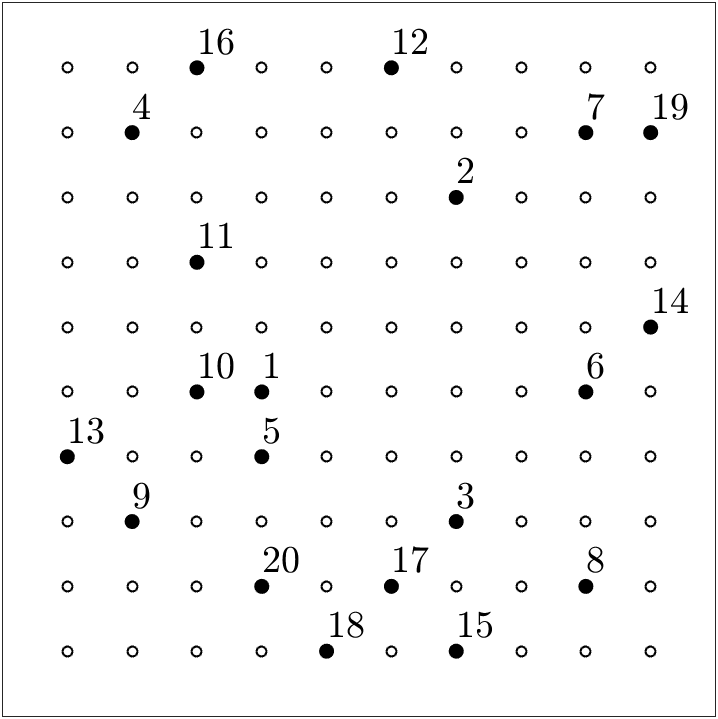}
\caption{The uncertainty aware (left) and uncertainty unaware (right) 
sensor placements. The numbers indicate the order in which the sensors
are picked in the greedy algorithms for minimizing~\cref{equ:design_criterion_LR}.}
\label{fig:OED}
\end{figure}

To evaluate the quality of the computed designs, we first study the expected
posterior variance and expected relative error of the MAP point. 
To facilitate this, we draw parameter samples 
$\{\iparm^{\text{v}}_i\}_{i=1}^\Nv$ from $\prior$ and generate validation data
samples, 
\[
\yhatip = \PPsi(\afwd(\iparm^{\text{v}}_i,\iparb^{\text{v}}_i) + \vec\eta^{\text{v}}_i),\quad
i \in \{1, \ldots, \Nv\},
\]
where $\{\iparb^{\text{v}}_i\}_{i=1}^\Nv$ and
$\{\vec\eta^{\text{v}}_i\}_{i=1}^\Nv$ are draws from $\mu_\iparb$ and
$\GM{\vec{0}}{\ncov}$, respectively. 
Then, we consider 
\begin{equation}\label{equ:diagnostics}
\bar{V}(\vec w) = \frac{1}{\Nv}\sum_{i=1}^{\Nv}\trace(\Cpost^\yhatip)
\quad \text{and} \quad
\bar{E}_{\scriptscriptstyle\text{MAP}}(\vec w) = 
\frac{1}{\Nv}\sum_{i=1}^{\Nv}
\frac{\|\iparmap^\yhatip-\iparm^{\text{v}}_i\|_\hilbm}{\|\iparm^{\text{v}}_i\|_\hilbm}.
\end{equation}
In our numerical experiments we use $\Nv=100$.
We compare $\bar{V}(\vec w)$ and $\bar{E}_{\scriptscriptstyle\text{MAP}}(\vec
w)$ when solving the inverse problem with the computed optimal design versus
randomly chosen designs in~\Cref{fig:cloud}, where we consider designs with
different numbers of sensors. We also examine the impact of ignoring the model
uncertainty in solving the OED problem (see the black dots in~\Cref{fig:cloud}).
Note that the uncertainty aware designs outperform the random designs as well as
the uncertainty unaware designs. This is most pronounced when the number of
sensors is small. This is precisely when optimal placement of sensors is
crucial.  We also observe that as the number of sensors in the designs increase,
the cloud moves left and downward. This is expected. The more sensors we use,
the more we can improve the quality of the MAP point and reduce posterior
uncertainty.  For $\Ns \geq 30$, we note that the difference between uncertainty
aware and uncertainty unaware designs become small (in the case of $\Ns = 30$, 
they nearly overlap). For $\Nd \geq 40$, even the random designs become
competitive. All these are expected.  As mentioned, earlier, optimal placement
of sensors is crucial, when we have access only to a ``small'' number measurement points.
What constitutes ``small'' is problem dependent.

\addtolength\abovecaptionskip{-5pt}
\begin{figure}[ht!]
\centering
\includegraphics[width=1\textwidth]{./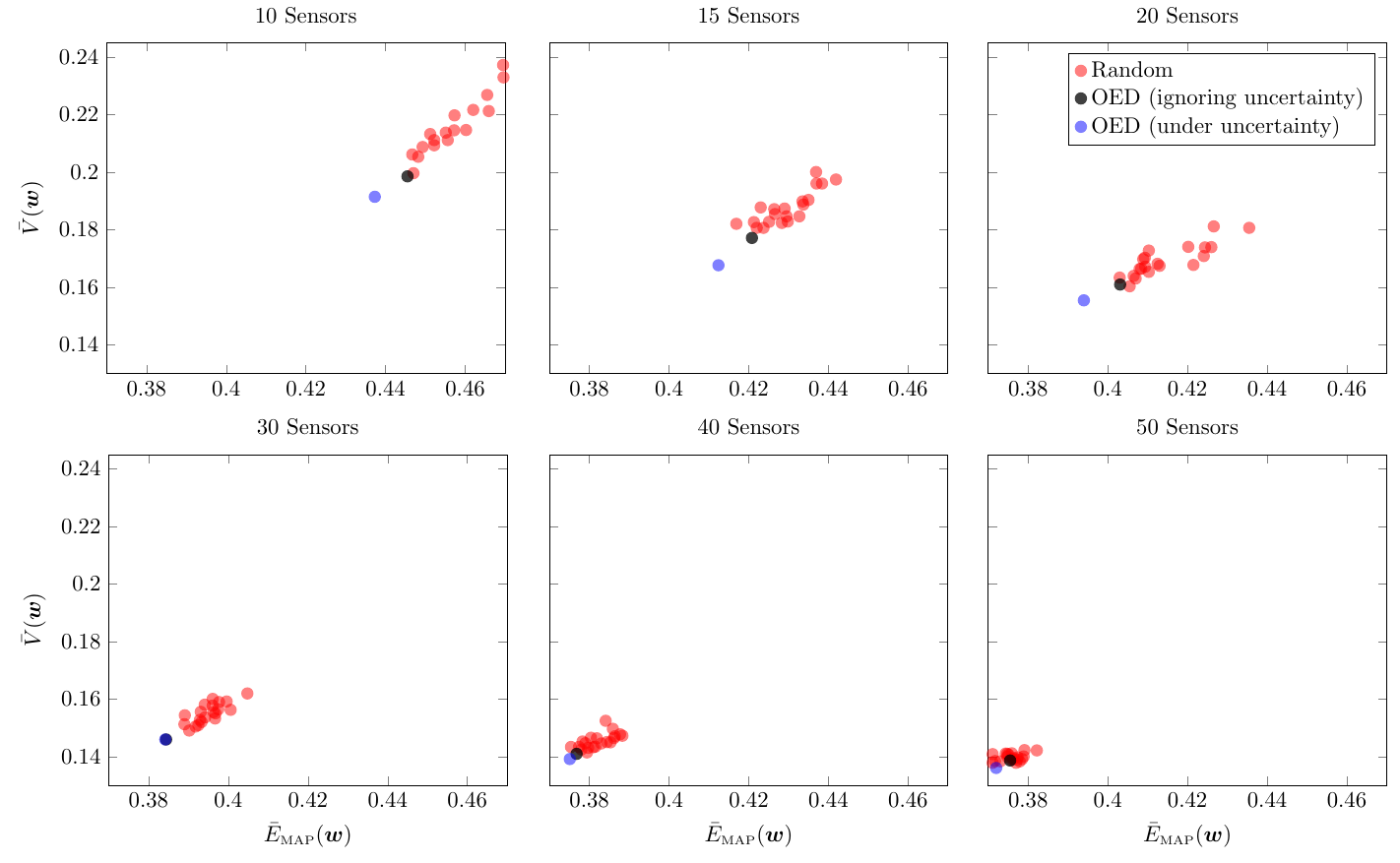}
\vspace{-5mm}
\caption{The expected relative error $\bar{E}_{\scriptscriptstyle\text{MAP}}(w)$ of the 
MAP point versus expected posterior variance $\bar{V}(w)$ 
using random designs (red dots), an uncertainty aware sensor placements (blue dots), and 
uncertainty unaware sensor placements (black dots) using 10, 15, and 20 sensors.} 
\label{fig:cloud}
\end{figure}
\addtolength\abovecaptionskip{5pt}

Next, we illustrate the effectiveness of the computed optimal designs in
reducing posterior uncertainty. To this end, we consider the computed optimal
designs with 10 sensors. In~\Cref{fig:uncertainty_reduction}, we show the
effectiveness of the uncertainty aware optimal design in reducing posterior
uncertainty; we also report the posterior standard deviation field, when
solving the inverse problem with an uncertainty unaware design.  To complement
this study, we consider the quality of the MAP points computed using uncertainty
aware and uncertainty unaware designs in~\Cref{fig:MAP_points}.  Overall, we
observe that the uncertainty aware design is more effective in reducing
posterior uncertainty and results in a higher quality MAP point. This
conclusion is also supported by the results reported in~\Cref{fig:cloud}. 

Note that the data used to solve the inverse problem is synthesized by solving
the PDE model~\cref{equ:model}, using our choice of the ``truth'' inversion
parameter (see~\Cref{fig:MAP_points}~(left)) and a randomly chosen $\iparb$
followed by extracting measurements at the sensor sites and adding measurement
noise.  This simulates the practical situation when field data that corresponds
to an unknown choice of $\iparb$ is collected.

\begin{figure}[ht!]
\centering
\includegraphics[height=.3\textwidth]{./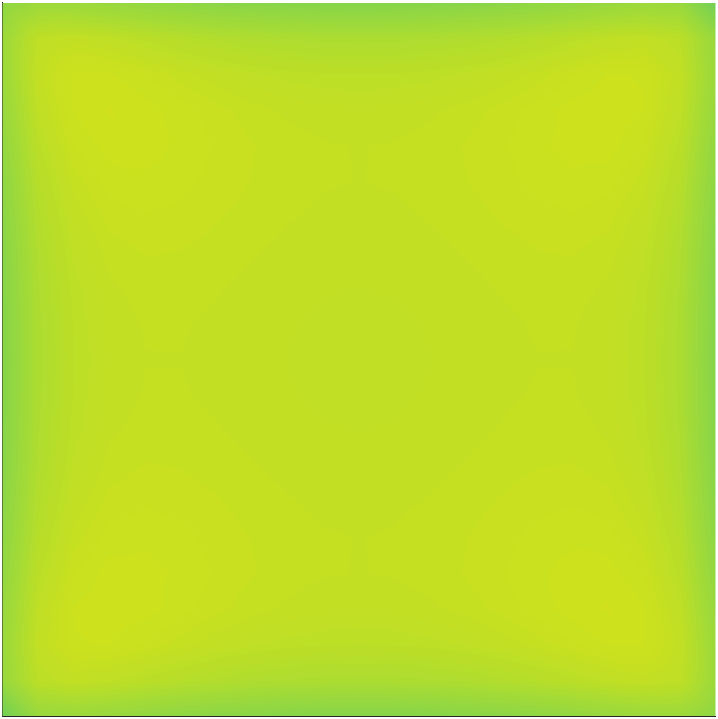}
\hfill
\includegraphics[height=.3\textwidth]{./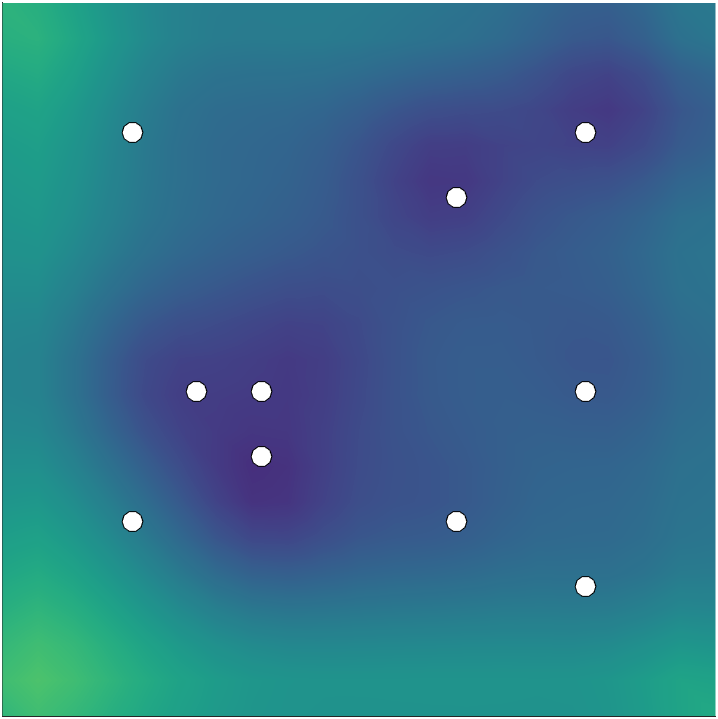}
\hfill
\includegraphics[height=.3\textwidth]{./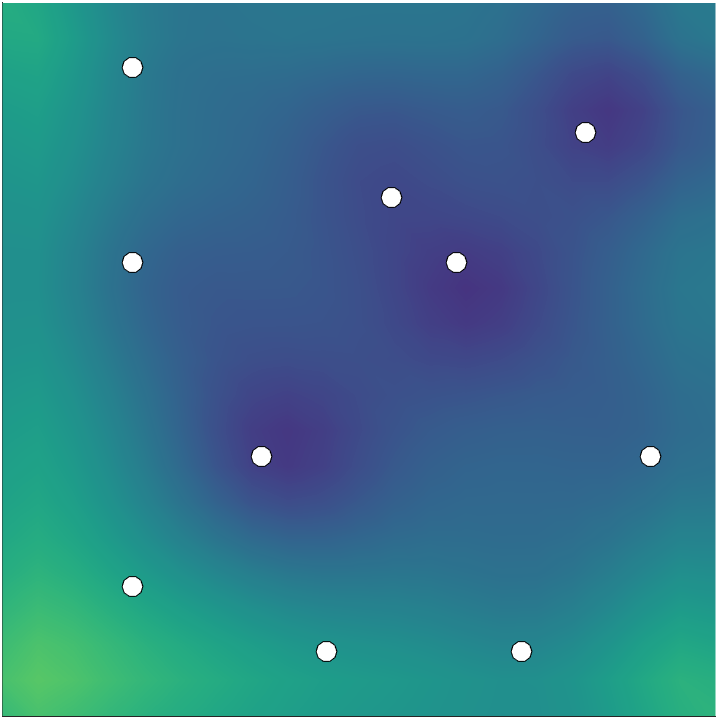}
\hfill
\includegraphics[height=.3\textwidth]{./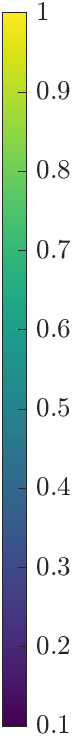}
\caption{The effectiveness of the computed optimal design with 10 sensors on posterior uncertainty. 
We report 
the pointwise prior standard deviation of
$m$ (left), and the pointwise posterior standard deviation of $m$ using the uncertainty aware 
(middle) and uncertainty unaware (right) optimal designs.}
\label{fig:uncertainty_reduction}
\end{figure}
\begin{figure}[ht!]
\centering
\includegraphics[height=.3\textwidth]{./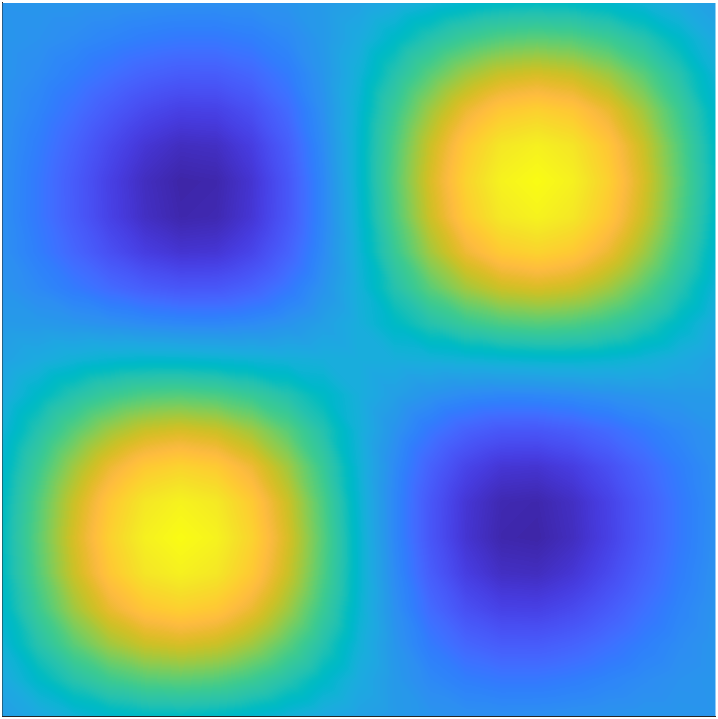}
\hfill
\includegraphics[height=.3\textwidth]{./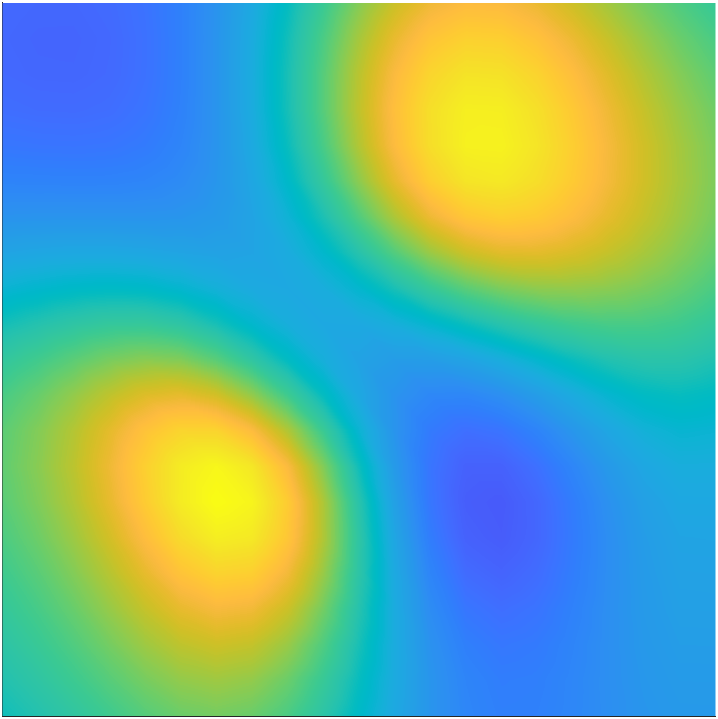}
\hfill
\includegraphics[height=.3\textwidth]{./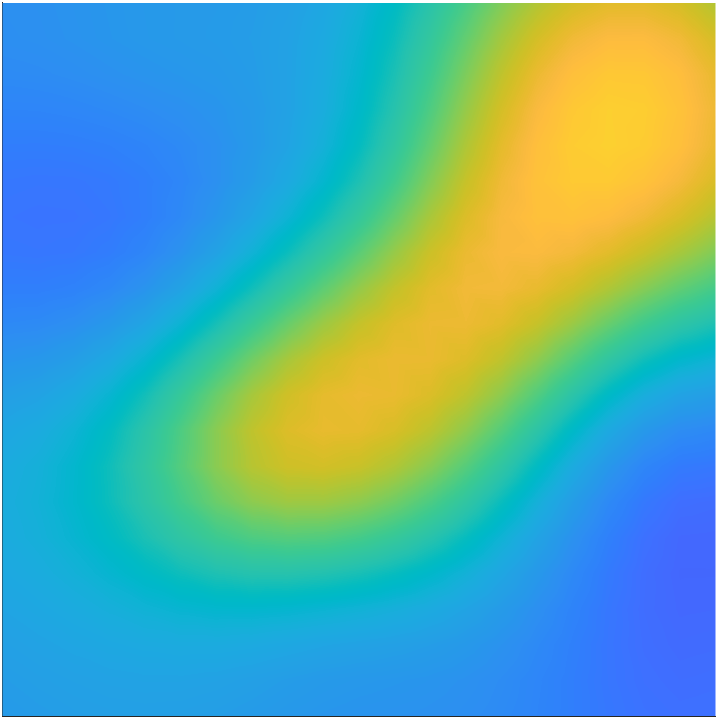}
\hfill
\includegraphics[height=.3\textwidth]{./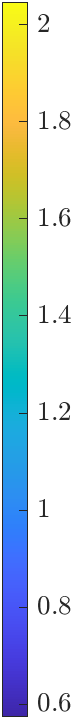}
\caption{The quality of the MAP points computed using optimal designs with 10 sensors.
We show the true parameter field (left), the MAP point computed using the uncertainty aware optimal
design (middle) and using the uncertainty unaware design (right).
}
\label{fig:MAP_points}
\end{figure}
In the above experiments, when examining the performance of uncertainty unaware
designs, model uncertainty was still accounted for (following the BAE
framework) when solving the inverse problem using these designs.  
Next, we examine the impact of ignoring model uncertainty in both OED and
inference stages.  For this experiment, we use the same synthesized data as
that used to obtain the results in \Cref{fig:MAP_points}~(right).
In~\Cref{fig:ignoring} we report the result of solving the inverse problem with
an uncertainty unaware design, when ignoring model uncertainty in the inverse
problem as well.  We note an impressive reduction of uncertainty;
see~\Cref{fig:ignoring}~(left), which uses the same scale as the standard
deviation plots in~\Cref{fig:uncertainty_reduction}. On the other hand, the MAP
point computed in this case is of very poor quality;
see~\Cref{fig:ignoring}~(right), where we have used the same scale as the MAP
point plots in~\Cref{fig:MAP_points}.  Intuitively, this indicates that
ignoring model
uncertainty in both OED and inference stages, in presence of significant
modeling uncertainties,  can lead to an unfortunate situation where one is
highly certain (i.e., low posterior variance) about a very wrong parameter
estimate.

\def \pos {0.5\columnwidth}
\addtolength\abovecaptionskip{-5pt}
\begin{figure}[ht]\centering
  \begin{tikzpicture}
    \node (1) at (0*\pos-0.4*\pos,  0*\pos){\includegraphics[height=.3\textwidth]{./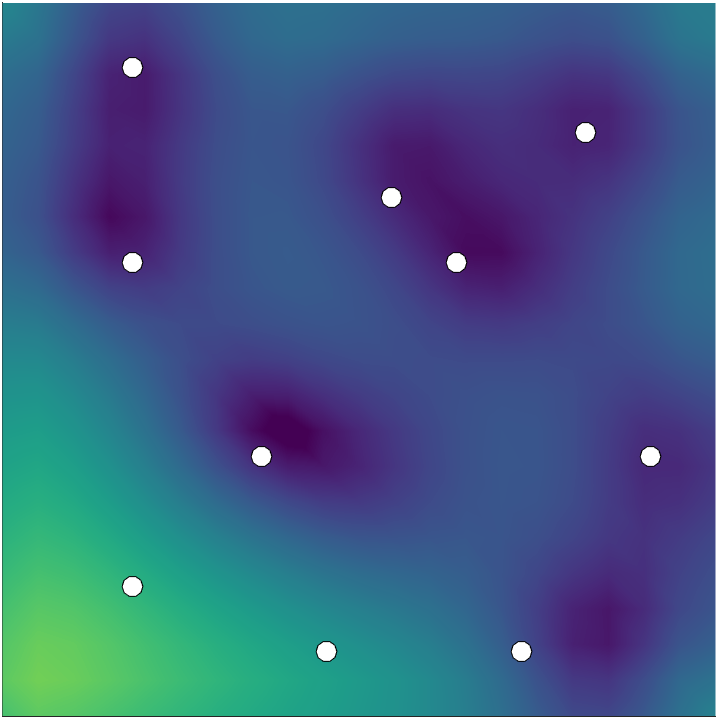}};
    \node (2) at (0*\pos-0.03*\pos, 0*\pos){\includegraphics[height=.306\textwidth]{./m_cov_CB.png}};
    \node (3) at (0*\pos+0.4*\pos,  0*\pos){\includegraphics[height=.3\textwidth]{./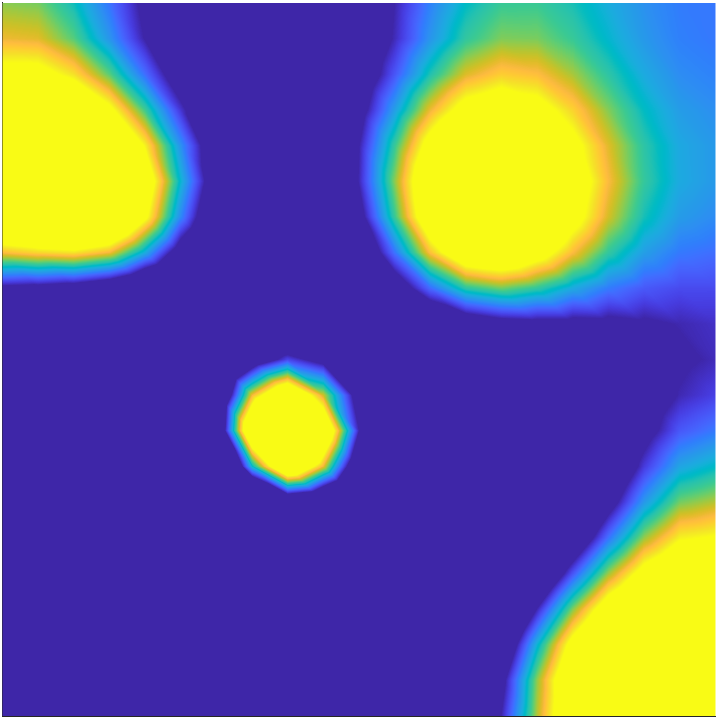}};
    \node (4) at (0*\pos+0.77*\pos, 0*\pos){\includegraphics[height=.3\textwidth]{./MAP_CB.png}};
  \end{tikzpicture}
\caption{The posterior standard deviation field (left) and MAP point (right) 
when solving the uncertainty unaware inverse problem using an uncertainty unaware 
design.}
\label{fig:ignoring}
\end{figure}
\addtolength\abovecaptionskip{5pt}

\begin{figure}
\centering
\includegraphics[height=.4\textwidth]{./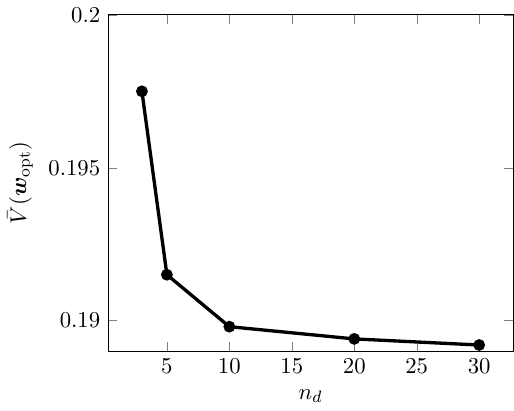}
\caption{The impact of the number of training data samples $\Nd$ on the quality of the computed
optimal design with 10 sensors.}
\label{fig:nd_study}
\end{figure}
As mentioned earlier, we used $\Nd = 5$ training data samples when solving the
OED problem \cref{equ:oeduu-ex}.  Clearly, increasing $\Nd$ would improve the
quality of the design---we would be optimizing a more accurate estimate of the
OED objective.  However, this comes with increased computational overhead. In
practice, typically a small $\Nd$ is effective in obtaining good quality
designs.  This is demonstrated in the results above (e.g.,~\cref{fig:cloud}). 
To examine the impact of changing $\Nd$ on the quality of the computed optimal
design, we perform a brief computational study
where we focus on designs with 10 sensors.
We compute the expected value of the posterior variance (as
in~\cref{equ:diagnostics}) at the computed optimal design, $\vec{w}_\text{opt}$, 
with values of $\Nd
\in \{3, 5, 10, 20, 30\}$.
The results are reported in~\cref{fig:nd_study}.  As before,
$\bar{V}(\vec{w}_\text{opt})$ is computed with $\Nv = 100$ validation data
samples.  Note that going beyond $\Nd = 10$ results in diminishing returns.
This, however, entails increased computational cost. In practice, studies such
as the one conducted here may be necessary to estimate a reasonable choice for
$\Nd$. In the present example, we see that $\Nd = 5$ is approximately at the
``elbow'' of the curve in~\cref{fig:nd_study}, making it a reasonable practical
choice. 

\section{Conclusions}\label{sec:conclusion}
In the present work, we addressed OED under uncertainty for Bayesian nonlinear
inverse problems governed by PDEs with infinite-dimensional inversion and
secondary parameters.  We have presented a mathematical framework and scalable
computational methods for computing uncertainty aware optimal designs.  Our
results demonstrate that ignoring the uncertainty in the OED
and/or the parameter inversion stages can lead to inferior designs and
inaccurate parameter estimation. Hence, it is important to account for
modeling uncertainties in Bayesian inversion and OED.

The limitations of the proposed approach are in its reliance on Gaussian
approximations for the posterior and the approximation error. The former is a
common approach in large-scale Bayesian inverse problems as well as in the BAE
literature. The Gaussian approximation to the posterior is suitable if a
linearization of the forward model, at the MAP point, is sufficiently accurate
for the set of parameters with significant posterior probability.  On the other
hand, the Gaussian approximation of the approximation error, which is guided by
the BAE approach, might fail to adequately capture the distribution of the
approximation error. However, as shown in various studies in the BAE
literature, this Gaussian approximation is reasonable in broad classes of
inverse problems; see,
e.g.,~\cite{KolehmainenTarvainenArridgeEtAl11,KaipioKolehmainen13,
MozumderTarvainenArridgeEtAl16,NicholsonPetraKaipio18}.  Additionally, in the
present work we considered a greedy approach for tackling the binary OED
optimization problems.  This can become expensive if the number of candidate
sensor locations or the desired number of sensors in the computed sensor
placements become very large.

The numerical experiments in the present work focus on an academic, albeit
application-driven, model problem. The present application was chosen since it
exhibits key problem structures seen in broad classes of ill-posed inverse
problems. Namely, smoothing properties of the forward operator lead to low-rank
structures that can be exploited when developing numerical methods. The present
numerical study also builds on and complements the previous study of BAE for the
Robin boundary condition inversion problems in~\cite{NicholsonPetraKaipio18}.
Examining the properties of the present method in more challenging inverse
problems with multiple types of modeling uncertainties is a subject for future
work.  

The discussions in this article point to a number of opportunities for future
work. In the first place, we point out that the use of BAE approach to account
for modeling uncertainties in Bayesian inverse problems is only one possible
application of this approach.  In general, BAE has been used in a wide range of
applications to account for uncertainty due to the use of approximate forward
models. For example, BAE can be used to model the approximation errors due to
the use of reduced order models or upscaled models.  BAE has also been used to
account for the errors due to the use of a mean-field model instead of an
underlying high-fidelity stochastic model~\cite{SimpsonBakerBuenzliEtAl22}.
Thus, the framework presented for OED under uncertainty in the present work can
be extended to OED in inverse problems where the forward model is replaced with
a low-fidelity approximate model instead of a computationally intensive
high-fidelity model, as long the approximation error can be modeled adequately
by a Gaussian.

Another interesting line of inquiry involves replacing the greedy strategy used
to find optimal designs with more powerful optimization algorithms. One
possibility is to follow a relaxation
approach~\cite{LiuChepuriFardad16,AlexanderianPetraStadlerEtAl14,
AlexanderianPetraStadlerEtAl16,AttiaConstantinescu22} where the design weights
are allowed to take values in the interval $[0, 1]$. This enables use of
efficient gradient-based optimization algorithms and can be combined with a
suitable penalty approach to control the sparsity of the computed sensor
placements.  An attractive alternative is the approach
in~\cite{AttiaLeyfferMunson22}, which tackles the binary OED optimization
problem by replacing it with a related stochastic programming problem.  A
further line of inquiry is investigating the idea of using fixed MAP points,
computed prior to solving the OED problem, as done in~\cite{WuChenGhattas23} for
OED problems with no additional uncertainties.  This idea, at the expense of
further approximations, replaces the bilevel optimization problem with a simpler
one, hence reducing computational cost significantly. 

Finally, in inverse problems governed by complex models with multiple sources of
secondary uncertainty, an a priori sensitivity analysis of the Bayesian inverse
problem may be necessary to identify secondary modeling uncertainties that are
most influential to the solution of the inverse problem. This may be
accomplished by suitable adaptations of hyper-differential sensitivity analysis
methods for inverse 
problems~\cite{SunseriHartVanBloemenWaandersAlexanderian20,ReeseHartBartEtAl24,SunseriAlexanderianHartEtAl24}.

\ack
The work of N.~Petra was supported in part by US National Science Foundation
grant DMS \#1723211. The work of A.~Alexanderian was supported in part by US
National Science Foundation grants DMS \#1745654 and \#2111044.

\section*{References}
\bibliographystyle{abbrv}
\bibliography{refs}

\begin{thebibliography}{10}

\bibitem{Alexanderian20}
A.~Alexanderian.
\newblock Optimal experimental design for infinite-dimensional {B}ayesian
  inverse problems governed by {PDEs}: A review.
\newblock {\em Inverse Problems}, 37(4), 2021.

\bibitem{AlexanderianGloorGhattas16}
A.~Alexanderian, P.~J. Gloor, and O.~Ghattas.
\newblock On {B}ayesian {A}-and {D}-optimal experimental designs in infinite
  dimensions.
\newblock {\em Bayesian Anal.}, 11(3):671--695, 2016.

\bibitem{AlexanderianPetraStadlerEtAl14}
A.~Alexanderian, N.~Petra, G.~Stadler, and O.~Ghattas.
\newblock A-optimal design of experiments for infinite-dimensional {B}ayesian
  linear inverse problems with regularized $\ell_0$-sparsification.
\newblock {\em SIAM J. Sci. Comput.}, 36(5):A2122--A2148, 2014.

\bibitem{AlexanderianPetraStadlerEtAl16}
A.~Alexanderian, N.~Petra, G.~Stadler, and O.~Ghattas.
\newblock A fast and scalable method for {A}-optimal design of experiments for
  infinite-dimensional {B}ayesian nonlinear inverse problems.
\newblock {\em SIAM J. Sci. Comput.}, 38(1):A243--A272, 2016.

\bibitem{AlexanderianPetraStadlerEtAl21}
A.~Alexanderian, N.~Petra, G.~Stadler, and I.~Sunseri.
\newblock Optimal design of large-scale {B}ayesian linear inverse problems
  under reducible model uncertainty: good to know what you don't know.
\newblock {\em SIAM/ASA J. Uncertain. Quantif.}, 9(1):163--184, 2021.

\bibitem{Aravkin12VanLeeuwe12}
A.~Y. Aravkin and T.~Van~Leeuwen.
\newblock Estimating nuisance parameters in inverse problems.
\newblock {\em Inverse Problems}, 28(11):115016, 2012.

\bibitem{AtkinsonDonev92}
A.~C. Atkinson and A.~N. Donev.
\newblock {\em Optimum Experimental Designs}.
\newblock Oxford, 1992.

\bibitem{AttiaConstantinescu22}
A.~Attia and E.~Constantinescu.
\newblock Optimal experimental design for inverse problems in the presence of
  observation correlations.
\newblock {\em SIAM J. Sci. Comput.}, 44(4):A2808--A2842, 2022.

\bibitem{AttiaLeyfferMunson22}
A.~Attia, S.~Leyffer, and T.~S. Munson.
\newblock Stochastic learning approach for binary optimization: Application to
  {B}ayesian optimal design of experiments.
\newblock {\em SIAM J. Sci. Comput.}, 44(2):B395--B427, 2022.

\bibitem{BabaniyiNicholsonVillaEtAl21}
O.~Babaniyi, R.~Nicholson, U.~Villa, and N.~Petra.
\newblock Inferring the basal sliding coefficient field for the {S}tokes ice
  sheet model under rheological uncertainty.
\newblock {\em The Cryosphere}, 15(4):1731--1750, 2021.

\bibitem{BartuskaEspathTempone22}
A.~Bartuska, L.~Espath, and R.~Tempone.
\newblock Small-noise approximation for {B}ayesian optimal experimental design
  with nuisance uncertainty.
\newblock {\em Comput. Methods Appl. Mech. Engrg.}, 399:115320, 2022.

\bibitem{Bui-ThanhGhattasMartinEtAl13}
T.~Bui-Thanh, O.~Ghattas, J.~Martin, and G.~Stadler.
\newblock A computational framework for infinite-dimensional {B}ayesian inverse
  problems. {P}art {I}: {T}he linearized case, with application to global
  seismic inversion.
\newblock {\em SIAM J. Sci. Comput.}, 35(6):A2494--A2523, 2013.

\bibitem{ChalonerVerdinelli95}
K.~Chaloner and I.~Verdinelli.
\newblock Bayesian experimental design: A review.
\newblock {\em Statist. Sci.}, 10(3):273--304, 1995.

\bibitem{ConstantinescuBessacPetraEtAl20}
E.~M. Constantinescu, N.~Petra, J.~Bessac, and C.~G. Petra.
\newblock Statistical treatment of inverse problems constrained by differential
  equations-based models with stochastic terms.
\newblock {\em SIAM/ASA J. Uncertain. Quantif.}, 8(1):170--197, 2020.

\bibitem{DaPrato06}
G.~Da~Prato.
\newblock {\em An introduction to infinite-dimensional analysis}.
\newblock Springer, 2006.

\bibitem{DaonStadler18}
Y.~Daon and G.~Stadler.
\newblock Mitigating the influence of boundary conditions on covariance
  operators derived from elliptic {PDEs}.
\newblock {\em Inverse Probl. Imaging}, 12(5):1083--1102, 2018.

\bibitem{DashtiStuart17}
M.~Dashti and A.~M. Stuart.
\newblock The {B}ayesian approach to inverse problems.
\newblock In R.~Ghanem, D.~Higdon, and H.~Owhadi, editors, {\em Handbook of
  Uncertainty Quantification}, pages 311--428. Spinger, 2017.

\bibitem{FengMarzouk19}
C.~Feng and Y.~M. Marzouk.
\newblock A layered multiple importance sampling scheme for focused optimal
  {B}ayesian experimental design.
\newblock {\em preprint}, 2019.
\newblock \url{https://arxiv.org/abs/1903.11187}.

\bibitem{GolubVanLoan13}
G.~H. Golub and C.~F. Van~Loan.
\newblock {\em Matrix computations}.
\newblock Johns Hopkins Studies in the Mathematical Sciences. Johns Hopkins
  University Press, Baltimore, MD, fourth edition, 2013.

\bibitem{HaberHoreshTenorio08}
E.~Haber, L.~Horesh, and L.~Tenorio.
\newblock Numerical methods for experimental design of large-scale linear
  ill-posed inverse problems.
\newblock {\em Inverse Problems}, 24(055012):125--137, 2008.

\bibitem{HalkoMartinssonTropp11}
N.~Halko, P.-G. Martinsson, and J.~A. Tropp.
\newblock Finding structure with randomness: Probabilistic algorithms for
  constructing approximate matrix decompositions.
\newblock {\em SIAM Rev.}, 53(2):217--288, 2011.

\bibitem{IsaacPetraStadlerEtAl15}
T.~Isaac, N.~Petra, G.~Stadler, and O.~Ghattas.
\newblock Scalable and efficient algorithms for the propagation of uncertainty
  from data through inference to prediction for large-scale problems, with
  application to flow of the {A}ntarctic ice sheet.
\newblock {\em J. Comput. Phys.}, 296:348--368, 2015.

\bibitem{JagalurMarzouk21}
J.~Jagalur-Mohan and Y.~M. Marzouk.
\newblock Batch greedy maximization of non-submodular functions: Guarantees and
  applications to experimental design.
\newblock {\em J. Mach. Learn. Res.}, 22, 2021.

\bibitem{KaipioKolehmainen13}
J.~Kaipio and V.~Kolehmainen.
\newblock Approximate marginalization over modeling errors and uncertainties in
  inverse problems.
\newblock {\em Bayesian Theory and Applications}, pages 644--672, 2013.

\bibitem{KaipioSomersalo05}
J.~Kaipio and E.~Somersalo.
\newblock {\em Statistical and Computational Inverse Problems}, volume 160 of
  {\em Applied Mathematical Sciences}.
\newblock Springer-Verlag, New York, 2005.

\bibitem{KaipioSomersalo07}
J.~Kaipio and E.~Somersalo.
\newblock Statistical inverse problems: discretization, model reduction and
  inverse crimes.
\newblock {\em Journal of computational and applied mathematics},
  198(2):493--504, 2007.

\bibitem{KolehmainenTarvainenArridgeEtAl11}
V.~Kolehmainen, T.~Tarvainen, S.~R. Arridge, and J.~P. Kaipio.
\newblock Marginalization of uninteresting distributed parameters in inverse
  problems-application to diffuse optical tomography.
\newblock {\em Int. J. Uncertain. Quantif.}, 1(1), 2011.

\bibitem{KovalAlexanderianStadler20}
K.~Koval, A.~Alexanderian, and G.~Stadler.
\newblock Optimal experimental design under irreducible uncertainty for linear
  inverse problems governed by {PDEs}.
\newblock {\em Inverse Problems}, 36(7), 2020.

\bibitem{KrauseSinghGuestrin08}
A.~Krause, A.~Singh, and C.~Guestrin.
\newblock Near-optimal sensor placements in {G}aussian processes: Theory,
  efficient algorithms and empirical studies.
\newblock {\em J. Mach. Learn. Res.}, 9:235--284, 2008.

\bibitem{Li19}
F.~Li.
\newblock A combinatorial approach to goal-oriented optimal {B}ayesian
  experimental design.
\newblock Master's thesis, Massachusetts Institute of Technology, 2019.

\bibitem{LiuChepuriFardad16}
S.~Liu, S.~P. Chepuri, M.~Fardad, E.~Ma{\c{s}}azade, G.~Leus, and P.~K.
  Varshney.
\newblock Sensor selection for estimation with correlated measurement noise.
\newblock {\em IEEE Trans. Signal Process.}, 64(13):3509--3522, 2016.

\bibitem{MozumderTarvainenArridgeEtAl16}
M.~Mozumder, T.~Tarvainen, S.~Arridge, J.~P. Kaipio, C.~D'Andrea, and
  V.~Kolehmainen.
\newblock Approximate marginalization of absorption and scattering in
  fluorescence diffuse optical tomography.
\newblock {\em Inverse Problems \& Imaging}, 10(1):227, 2016.

\bibitem{NicholsonPetraKaipio18}
R.~Nicholson, N.~Petra, and J.~P. Kaipio.
\newblock Estimation of the {R}obin coefficient field in a {P}oisson problem
  with uncertain conductivity field.
\newblock {\em Inverse Problems}, 34(11):115005, 2018.

\bibitem{NicholsonPetraVilla23}
R.~Nicholson, N.~Petra, U.~Villa, and J.~P. Kaipio.
\newblock On global normal linear approximations for nonlinear {B}ayesian
  inverse problems.
\newblock {\em Inverse Problems}, 39(5):054001, 2023.

\bibitem{PinskiSimpsonStuartEtAl15}
F.~J. Pinski, G.~Simpson, A.~M. Stuart, and H.~Weber.
\newblock Kullback--{L}eibler approximation for probability measures on
  infinite dimensional spaces.
\newblock {\em SIAM J. Math. Anal.}, 47(6):4091--4122, 2015.

\bibitem{ReeseHartBartEtAl24}
W.~Reese, J.~Hart, B.~van Bloemen~Waanders, M.~Perego, J.~Jakeman, and
  A.~Saibaba.
\newblock Hyper-differential sensitivity analysis in the context of {B}ayesian
  inference applied to ice-sheet problems.
\newblock {\em International Journal for Uncertainty Quantification}, 14(3).

\bibitem{ShulkindHoreshAvron18}
G.~Shulkind, L.~Horesh, and H.~Avron.
\newblock Experimental design for nonparametric correction of misspecified
  dynamical models.
\newblock {\em SIAM/ASA J. Uncertain. Quantif.}, 6(2):880--906, 2018.

\bibitem{SimpsonBakerBuenzliEtAl22}
M.~J. Simpson, R.~E. Baker, P.~R. Buenzli, R.~Nicholson, and O.~J. Maclaren.
\newblock Reliable and efficient parameter estimation using approximate
  continuum limit descriptions of stochastic models.
\newblock {\em J. Theoret. Biol.}, 549, 2022.

\bibitem{Smith13}
R.~C. Smith.
\newblock {\em Uncertainty quantification: {T}heory, implementation, and
  applications}, volume~12 of {\em Computational Science and Engineering
  Series}.
\newblock SIAM, 2013.

\bibitem{Stuart10}
A.~M. Stuart.
\newblock Inverse problems: {A B}ayesian perspective.
\newblock {\em Acta Numerica}, 19:451--559, 2010.

\bibitem{SunseriAlexanderianHartEtAl24}
I.~Sunseri, A.~Alexanderian, J.~Hart, and B.~van Bloemen~Waanders.
\newblock Hyper-differential sensitivity analysis for nonlinear {B}ayesian
  inverse problems.
\newblock {\em International Journal for Uncertainty Quantification}, 14(2),
  2024.

\bibitem{SunseriHartVanBloemenWaandersAlexanderian20}
I.~Sunseri, J.~Hart, B.~van Bloemen~Waanders, and A.~Alexanderian.
\newblock Hyper-differential sensitivity analysis for inverse problems
  constrained by partial differential equations.
\newblock {\em Inverse Problems}, 2020.

\bibitem{Ucinski05}
D.~Uci{\'n}ski.
\newblock {\em Optimal measurement methods for distributed parameter system
  identification}.
\newblock CRC Press, Boca Raton, 2005.

\bibitem{WuChenGhattas23}
K.~Wu, P.~Chen, and O.~Ghattas.
\newblock A fast and scalable computational framework for large-scale
  high-dimensional {B}ayesian optimal experimental design.
\newblock {\em SIAM/ASA Journal on Uncertainty Quantification}, 11(1):235--261,
  2023.

\end{thebibliography}

\end{document}